\newtheorem{theorem}{Theorem}[section]
\newtheorem*{corollary}{Corollary}
\newtheorem{lemma}{Lemma}[section]
\DeclareMathOperator{\diam}{diam}
\providecommand{\abs}[1]{\lvert \, #1 \, \rvert}
\newcommand{\bea}[1]{\begin{eqnarray}\label{#1}}
\newcommand{\eea}{\end{eqnarray}}
\title{Higher order return times for $\phi$-mixing measures}
\begin{document}
 \maketitle
\authors{Nicolai T A Haydn\footnote{Department of Mathematics, University of Southern California,
Los Angeles, 90089-2532. E-mail: {\tt \email{nhaydn@usc.edu}}.},
Gin Park\footnote{E-mail: {\tt \email{gin\_park@icloud.com}}.}}


\maketitle

\pagenumbering{roman}

\section*{Abstract}
In this paper we consider $\phi$-mixing measures and show that the limiting return times
distribution is compound Poisson distribution as the target sets shrink to a zero measure
set. The approach we use generalises a method given by Galves and Schmitt in 1997 
for the first entry time to higher orders.


\pagenumbering{arabic}

\section{Introduction}

Entry times distributions have been studied mainly for the last three decades primarily
starting with a paper by Pitskel~\cite{Pit91} for Axiom A maps using symbolic dynamics
for equilibrium states.
A similar result was shortly after that also obtained by Hirata~\cite{Hir93} using the 
Laplace transform and a spectral approach to the transfer operator. He then
concluded from this Poisson distributed higher order return times invoking the
weak mixing property. Pitskel also found that at periodic points the limiting
distribution of first return time is not a straight exponential distribution but 
has a point mass at $0$ which corresponds to the periodicity and whose 
value is given by the Birkhoff sum of the normalised potential function over the 
period. For interval maps Collet~\cite{Col96} obtained similar results.

In 1997 Galves and Schmitt~\cite{GS97} gave a more general result for exponentially
$\psi$-mixing measures in a symbolic setting  where the return times distributions 
are determined in cylinder neightbourhoods of points. For generic i.e.\ non-periodic
points this then gives in the limit an exponential distribution and at periodic points 
one recovers a Pitskel type result. 
This method was then greatly extended by Abadi~\cite{Ab06,AS11} to $\alpha$-mixing measures.
 By other methods Abadi and Vergnes~\cite{Ab08,AV08}) then also proved the limiting 
 return statistics at generic points to be Poissonian for $\phi$-mixing measures.
 Using a more direct hands-on approach Poisson distribution was also proven in~\cite{HSV99}
 for $\phi$-mixing measures.
 Denker, Gordin and Sharova~\cite{DGS04} used the Chen-Stein method to find the limiting
 return times distribution of toral maps. The special setting allowed to apply harmonic analysis
 and found a dichotomy between generic, non-periodic, points  and periodic points. 
In~\cite{HV09} we then found that for $\psi$-mixing systems one gets a Poisson
distribution for generic points and in the case of periodic points one obtains a 
P\'olya-Aeppli distribution. We used a moment method which in the periodic case 
was suitably adapted and used Cauchy estimates to get  bounds on the error terms.

In~\cite{HV20,GHV24} we expanded the setting to allow the limiting set to be any null-set
and not necessarily a single point where we cover two different settings: geometric ball-like
neighbourhoods where we assume that the invariant measure has polynomial decay 
of correlations and $\phi$-mixing measures where we use the Chen-Stein method 
for compound Poisson distribution.

In Section~\ref{cluster.probabilities} we describe the probabilities that determine the 
likelihood of clusters of a given length to occur. The main result of that section is
Theorem~\ref{sequence.theorem} which allows the parameters to be computed by
a single limit rather than by executing a double limit. For this theorem we require the 
mixing to be $\phi$-mixing rather than $\alpha$-mixing in which case the decay rate 
necessary would have to be very rapid. In Section~\ref{convolution} we then give the 
convolution formula that allows to do a blocking argument for higher order returns.
The special case of no entries is the formula originally used by Galves and Schmitt 
and subsequently by others. In Section~\ref{generating.function} we use the blocking 
argument to related higher order return probabilities to those on short blocks. 
In the final section we comment in particular on the standard Kac scaling (with 
time adjustment) in case when the coefficient $\alpha_1$ (extremal index) is non zero.


\section{Return times and cluster probabilities}\label{cluster.probabilities}

Let $\Omega$ be a measurable space with $T:\Omega\to\Omega$ and $\mu$ a $T$-invariant
probability measure. 
For $U\subset \Omega$ so that $\mu(U)>0$ define the counting function $Z^N_U:\Omega\to \mathbb{N}_0$ by
$$
Z^N_U= \sum_{j=0}^{N-1} \chi_{U}\circ T^j,
$$
where the cutoff functiion $N$ depends on $U$ and some parameter $t>0$.
$Z^N(x)$ counts the number of entries into $U$ of the point $x$ along the orbit segment 
of length $N$. If we denote by
$$
\tau_U(x)=\inf\{j\ge1: T^jx\in U\}
$$
the entry/return time for the first hit of $U$, then to say that $\tau_U(x)\ge N$ is equivalent to $Z_U^N(x)=0$. 
By Kac's theorem one has for ergodic measures $\mu$ that $\int_U\tau_U(x)\,d\mu(x)=1$.

We assume that $\Omega$ has a finite or countable partition $\mathcal{A}$.
Then $\mathcal{A}^n=\bigvee_{j=0}^{n-1} T^{-j}\mathcal{A}$ be its $n$-th join.
We assume that the elements of $\mathcal{A}^\infty$ consist of singletons.
We assume that the measure $\mu$ is left $\phi$-mixing w.r.t.\ partition $\mathcal{A}$ if 
there exists a decreasing function $\phi(k)\to 0$ as $k\to\infty$ so that
$$
|\mu(B\cap T^{-n-k}C)-\mu(B)\mu(C)|
\le \phi(k)\mu(B)
$$
for all $B\in \sigma(\mathcal{A}^n)$, $C\in\sigma(\bigcup_{\ell=1}^\infty)$ and all $k\in\mathbb{N}$.
If the right hand side equals $\phi(k)\mu(C)$ then we say $\mu$ is right $\phi$-mixing.
We will state all results here for left $\phi$-mixing, but they also all apply to the 
case when $\mu$ is right $\phi$-mixing. The proofs then require some rather obvious 
modifications.

If $U_n\subset \Omega$ is a sequence of nested positive measure sets whose intersection
$\Gamma=\bigcap_nU_n$ forms a zero measure set, e.g.\ a sigle point, then Kac's theorem 
suggests that as $\mu(U_n)\to0$ the associated return times $\tau_{U_n}$ diverge to infinity
at a rate which on average is of the order $\frac1{\mu(U_n)}$. The limiting set $\Gamma$ however
can display some periodic like behaviour within its neighbourhoods in which case we will
get very short returns that are not scaled as Kac's formula suggest as $\mu(U_n)$ goes to zero.
This implies that we get clusters of returns whose returns have conditional probabilities that are
not affected by the shrinking of the set $U_n$ as $n$ goes to zero.
In order to capture this behaviour, let $L$ a large number and let us put  for $k=1,2,\dots$
$$
\lambda_k(L,U) = \frac{\mu(Z^L_{U}=k)}{\mu(\tau_{U}\leq s)}
$$
for the probability to have a cluster of $k$ hits on orbit segment of length $L$ 
provided we record at least one hit during that time. We then put 
$\lambda_k=\lim_{L\to\infty}\lambda_k(L)$, where  
$\lambda_k(L)= \lim_{n\to\infty}\lambda_k(L,U_n)$  provided that the limit exists.
The next theorem then states that the double limit defining $\lambda_k$ can 
be replaced by a single limit along a suitable sequence $(s_n,U_n$ if the measure 
is $\phi$-mixing.

\begin{theorem} \label{sequence.theorem} 
Assume $\mu$ is $\phi$-mixing and let $s_n\to\infty$ be a sequence and $U_n\subset\Omega$
 such that $s_n\mu(\tau_{U_n}\le s_n)\to0$ as $n\to\infty$.
If $\mu$ is $\phi$-mixing and $\lambda_k\not=0$  then 
$$
\lambda_k=\lim_{n\to\infty}\lambda_k(s_n,U_n).
$$
\end{theorem}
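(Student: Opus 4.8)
The plan is to replace the double limit defining $\lambda_k$ by a single ``diagonal'' comparison and then to run a blocking/renewal argument. Write $p_n(m)=\mu(\tau_{U_n}\le m)$ and $q_n(m,k)=\mu(Z^m_{U_n}=k)$, so that $\lambda_k(m,U_n)=q_n(m,k)/p_n(m)$. Since $\mu(U_n)\le p_n(s_n)\le s_n\,p_n(s_n)$, the hypothesis $s_np_n(s_n)\to0$ already forces $p_n(s_n)\to0$ and $s_n\mu(U_n)\to0$. First I would fix an auxiliary sequence $L_n\to\infty$ with $L_n=o(s_n)$, chosen slow enough that $\lambda_k(L_n,U_n)\to\lambda_k$; such an $L_n$ exists because the iterated limit $\lambda_k=\lim_{L\to\infty}\lambda_k(L)$ with $\lambda_k(L)=\lim_{n\to\infty}\lambda_k(L,U_n)$ is assumed to exist (a standard diagonal extraction from a convergent double limit), and the assumption $\lambda_k\ne0$ keeps $q_n(L_n,k)$ and $p_n(L_n)$ comparable so that the following comparison may be carried out multiplicatively. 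By the triangle inequality it then suffices to show $\lambda_k(s_n,U_n)/\lambda_k(L_n,U_n)\to1$.

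To that end I would fix a mixing gap $g_n\to\infty$ with $g_n=o(L_n)$ and partition the orbit segment $[0,s_n)$ into $a_n=\lfloor s_n/(L_n+g_n)\rfloor\to\infty$ blocks of length $L_n$ separated by gaps of length $g_n$; note that $a_n p_n(L_n)\le s_n\mu(U_n)\to0$, so the expected number of entries into $U_n$ along a window of length $s_n$ is asymptotically negligible. Conditioning on the position of the first entry in $[0,s_n)$ and on the configuration of entries in the block containing it, one aims for representations
\[
p_n(s_n)=a_n\,p_n(L_n)\,(1+\varepsilon_n),\qquad q_n(s_n,k)=a_n\,q_n(L_n,k)\,(1+\delta_n),
\]
so that $\lambda_k(s_n,U_n)=\lambda_k(L_n,U_n)\,(1+\delta_n)/(1+\varepsilon_n)$ and the whole proof reduces to $\varepsilon_n,\delta_n\to0$. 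The contributions to $\varepsilon_n,\delta_n$ are of three kinds: configurations with two or more clusters inside $[0,s_n)$ (equivalently, two entries at mutual distance $\gtrsim g_n$), which a union bound over pairs of positions combined with left $\phi$-mixing across their separation estimates to be negligible relative to a main term of order $s_n\mu(U_n)$, using $s_n\mu(U_n)\to0$ and $\phi(g_n)\to0$; boundary effects at block edges and at the two ends of $[0,s_n)$, of relative size $O(g_n/L_n+1/a_n)\to0$; and the ``separation of scales'' error from applying left $\phi$-mixing across the $\approx a_n$ gaps to decouple the block containing the first cluster from the (essentially empty) remainder of the segment.

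The decisive point, and the reason the statement is for $\phi$-mixing rather than $\alpha$-mixing, is the last item. The $\phi$-mixing bound $\lvert\mu(B\cap T^{-n-k}C)-\mu(B)\mu(C)\rvert\le\phi(k)\mu(B)$ is \emph{multiplicative} in $\mu(B)$, and in the decomposition the set $B$ always contains the event ``there is an entry in the block under consideration''. Hence each application of mixing contributes a factor $\phi(g_n)$ times a quantity already of order $\mu(U_n)$ (numerator) or $p_n(L_n)$ (denominator); summing over the $\approx a_n$ blocks yields error terms of order $a_n\phi(g_n)\mu(U_n)$ against main terms of order $a_n\mu(U_n)$, i.e.\ a \emph{relative} error $O(\phi(g_n))=o(1)$ requiring no decay rate of $\phi$. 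An additive $\alpha$-bound would instead leave a relative error of order $\alpha(g_n)/\mu(U_n)$, which blows up as $\mu(U_n)\to0$ unless $\alpha$ decays very fast. Granting $\varepsilon_n,\delta_n\to0$ we conclude $\lambda_k(s_n,U_n)\to\lambda_k(L_n,U_n)\to\lambda_k$, which is the assertion.

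The step I expect to be the main obstacle is the honest, simultaneous choice of the three scales --- the given $s_n$, the intermediate cluster window $L_n$, and the gap $g_n$ --- so that $1\ll g_n\ll L_n\ll s_n$, $a_n\to\infty$, $\lambda_k(L_n,U_n)\to\lambda_k$, and all three families of errors vanish together; in particular one must check that the $L_n$ delivered by the diagonal extraction can also be taken to meet the separation requirements, that the effective ``depth'' of $U_n$ with respect to the partition $\mathcal A$ (the least $d$ with $U_n$ measure-theoretically in $\sigma(\mathcal A^d)$) is $o(g_n)$ so that the $\phi$-mixing inequality genuinely applies across a block edge, and --- depending on the exact form of the error bounds --- that the decay of $\phi$ is fast enough relative to these scales (this last being precisely where one uses that $\phi$-mixing, unlike $\alpha$-mixing, carries the redeeming factor $\mu(U_n)$). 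A secondary nuisance is that the renewal decomposition in the second step is not shift invariant near the two ends of $[0,s_n)$, so the first-entry conditioning and the accounting of the tail after the first cluster must be set up with some care; once the scales are fixed, however, this reduces, like the rest of the error analysis, to union bounds and repeated use of the $\phi$-mixing inequality.
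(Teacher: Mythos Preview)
Your blocking/renewal strategy is essentially the paper's: it proves ratio lemmas giving $\mathbb{P}(Z_U^{rL}\ge1)=r\,\mathbb{P}(Z_U^L\ge1)\bigl(1+O(\mathbb{P}(Z_U^{rL}\ge1)+\phi(L^\beta)+L^{-(1-\beta)})\bigr)$ and the analogous statement for $\mathbb{P}(Z_U^{rL}=k)$ (with an extra factor $\lambda_k(rL,U)^{-1}$, which is where $\lambda_k\ne0$ enters), and then writes
\[
\hat\lambda_k(n)=\lambda_k(L,U_n)\cdot\frac{\mathbb{P}(Z_{U_n}^{s_n}=k)}{r_n\mathbb{P}(Z_{U_n}^L=k)}\cdot\frac{r_n\mathbb{P}(Z_{U_n}^L\ge1)}{\mathbb{P}(Z_{U_n}^{s_n}\ge1)}=\lambda_k(L,U_n)\,(1+O(\varepsilon)).
\]
The one organisational difference, which dissolves the scale-coordination obstacle you flag at the end, is that the paper keeps $L$ \emph{fixed} rather than running a diagonal $L_n\to\infty$: given $\varepsilon>0$ it first chooses $L$ large enough that $\phi(L^\beta),\,L^{-(1-\beta)}<\varepsilon$ and $|\lambda_k-\lambda_k(L)|<\varepsilon$, and only then sends $n\to\infty$ so that $|\lambda_k(L)-\lambda_k(L,U_n)|<\varepsilon$ and $\mathbb{P}(Z_{U_n}^{s_n}\ge1)<\varepsilon$. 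Because the ratio lemmas are uniform in $r=s_n/L$, no simultaneous choice of three scales is needed, and the cylinder depth of $U_n$ is absorbed into taking $n$ large after $L$ is fixed. Your identification of the multiplicative $\phi$-bound as the reason relative errors stay $O(\phi(g_n))$ (versus the fatal $\alpha(g_n)/\mu(U_n)$ for $\alpha$-mixing) is exactly the paper's point.
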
 

\begin{lemma} \label{lower.bound}
Assume $\mu$ is $\phi$-mixing and let $\gamma>0$ (small). If $\Delta$ so that 
$\phi(\gamma\Delta)<\gamma/2$ and $\mathbb{P}(Z_{U}^{r\Delta})< \gamma/2$, then
$$
\mathbb{P}(Z_U^{r\Delta}\ge1)\ge \frac12 r^{\gamma_0}\mathbb{P}(Z_U^\Delta\ge1),
$$
where $\gamma_0=\frac{\log(2-\gamma)}{\log(2+\gamma)}\approx 1-\frac\gamma{\log2}$.
\end{lemma}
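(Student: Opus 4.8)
The plan is to isolate a single ``doubling'' inequality for the first-visit probability and then iterate it geometrically. First I would abbreviate $a_N=\mathbb{P}(Z_U^N\ge1)$, the probability of at least one visit to $U$ during the first $N$ steps. This is non-decreasing in $N$, and by $T$-invariance together with a union bound over the orbit pieces $[0,N)$ and $[N,N+M)$ it is subadditive, $a_{N+M}\le a_N+a_M$; in particular $a_{r\Delta}\le r\,a_\Delta$, and the content of the lemma is the matching lower bound with the worse exponent $\gamma_0<1$. I read the hypothesis $\mathbb{P}(Z_U^{r\Delta})<\gamma/2$ as $a_{r\Delta}<\gamma/2$, which by monotonicity forces $a_L<\gamma/2$ for every $L\le r\Delta$; this is the only role of the smallness assumption.

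The core step is the doubling inequality: if $\Delta\le L$ and $(2+\gamma)L\le r\Delta$, then $a_{(2+\gamma)L}\ge(2-\gamma)\,a_L$. I would prove it by fitting two orbit windows of length $L$ into a segment of length $(2+\gamma)L$, separated by a gap of length $\gamma L$. Put $B_1=\{Z_U^L\ge1\}$, the event of a visit during $[0,L)$, and $B_2=T^{-(1+\gamma)L}B_1$, the event of a visit during $[(1+\gamma)L,(2+\gamma)L)$. Both events have probability $a_L$ by invariance, and both are contained in $\{Z_U^{(2+\gamma)L}\ge1\}$, so $a_{(2+\gamma)L}\ge\mathbb{P}(B_1\cup B_2)\ge 2a_L-\mathbb{P}(B_1\cap B_2)$. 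Since $B_1$ is measurable with respect to an initial $\sigma$-algebra, $B_2$ with respect to a future one, and the two are separated by $\gamma L\ge\gamma\Delta$ symbols, $\phi$-mixing and the monotonicity of $\phi$ give $\mathbb{P}(B_1\cap B_2)\le a_L^2+\phi(\gamma L)\,a_L\le a_L^2+\tfrac{\gamma}{2}a_L$. Because $a_L<\gamma/2$ we have $a_L^2\le\tfrac{\gamma}{2}a_L$, hence $\mathbb{P}(B_1\cap B_2)\le\gamma a_L$ and therefore $a_{(2+\gamma)L}\ge(2-\gamma)a_L$.

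To finish I would iterate this. Set $\Delta_j=(2+\gamma)^j\Delta$ and let $m=\lfloor\log r/\log(2+\gamma)\rfloor$, so that $\Delta_m\le r\Delta$ and in particular $a_{\Delta_j}<\gamma/2$ for $0\le j\le m-1$, which is exactly what the doubling step requires at stage $j$. Applying it $m$ times gives $a_{r\Delta}\ge a_{\Delta_m}\ge(2-\gamma)^m a_\Delta$, and since $2-\gamma>1$,
\[
(2-\gamma)^m \ge (2-\gamma)^{\frac{\log r}{\log(2+\gamma)}-1} = \frac{1}{2-\gamma}\,r^{\frac{\log(2-\gamma)}{\log(2+\gamma)}} = \frac{1}{2-\gamma}\,r^{\gamma_0} \ge \tfrac{1}{2}\,r^{\gamma_0},
\]
which is precisely the asserted bound.

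The step I expect to cost the most care is not any of the estimates above but the bookkeeping around the doubling inequality. In the partition setting the target set $U$ typically depends on finitely many coordinates, so $\{Z_U^L\ge1\}$ is in fact measurable only with respect to slightly more than $L$ symbols; one then has to enlarge the gap (or equivalently shrink the windows) so that $B_1$ and $B_2$ are genuinely separated by at least $\gamma\Delta$ symbols before invoking $\phi(\gamma\Delta)<\gamma/2$, and one must pass to integer parts of $(2+\gamma)^j\Delta$. These corrections are harmless provided the coordinate-width of $U$ is small compared with $\gamma\Delta$ --- a condition implicit in the regime where the lemma is applied --- but they are what makes the clean constant $\tfrac{1}{2}$ come out, and keeping $\Delta_m\le r\Delta$ throughout the iteration (so that $a_{\Delta_j}<\gamma/2$ stays valid) is what pins down the choice of $m$.
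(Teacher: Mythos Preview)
Your proposal is correct and follows essentially the same route as the paper: define the doubling inequality $a_{(2+\gamma)L}\ge(2-\gamma)a_L$ by placing two length-$L$ windows separated by a $\gamma L$ gap, bound the intersection via $\phi$-mixing and the smallness hypothesis, then iterate geometrically and absorb the rounding into the factor $\tfrac12$. Your derivation of the constant $\tfrac12$ from $(2-\gamma)^{-1}$ and your remarks on integer parts and the coordinate-width of $U$ make explicit what the paper leaves implicit, but the argument is otherwise the same.
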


\begin{proof} If we put $E_\Delta=\{Z_U^\Delta\ge 1\}$ then $T^{-j}E_\Delta\subset E_{(2+\gamma)\Delta}$ 
for $j=0,1,\dots, (1+\gamma)\Delta$. Consequently
$$
E_\Delta\cup T^{-(1+\gamma)\Delta}E_\Delta\subset E_{(2+2\gamma)\Delta}
$$
and thus by the $\phi$-mixing property with a gap of length $\gamma\Delta$:
\begin{eqnarray*}
\mu(E_{(2+\gamma)\Delta})
&\ge&\mu\!\left(E_\Delta\cup T^{-(1+\gamma)\Delta}E_\Delta\right)\\
&=& 2\mu(E_\Delta)-\mu(E_\Delta\cap T^{-(1+\gamma)\Delta}E_\Delta)\\
&\ge& 2\mu(E_\Delta)-\mu(E_\Delta)\!\left(\mu(E_\Delta)+\phi(\gamma\Delta)\right)\\
&\ge& (2-\gamma)\mu(E_\Delta)
\end{eqnarray*}
since by assumption $\mu(E_\Delta)+\phi(\gamma\Delta)<\gamma$.
Iterating this dyadic argument we arrive at
$$
\mu(E_{(2+\gamma)^w\Delta})
\ge (2-\gamma)^w\mu(E_\Delta).
$$
If $\gamma_0=\frac{\log(2-\gamma)}{\log(2+\gamma)}$ (which in linear approximation is $1-\frac\gamma{\log2}$),
 then $(2-\gamma)^w=(2+\gamma)^{w\gamma_0}$ which 
then implies the statement in the lemma where the additional factor $\frac12$ accounts for $r$ being
any integer.
\end{proof}

\begin{lemma} \label{ratio.bound}
Assume $\mu$ is $\phi$-mixing and let $\gamma\in(0,\frac12)$. Then for all $r$, $L$ and $\Delta<L$ satisfying
$\phi(\Delta), \mathbb{P}(Z_U^{rL}\ge1)<\frac\gamma2$ 
one has ($\gamma_0=\frac{\log(2-\gamma)}{\log(2+\gamma)}$)
$$
1\ge\frac{\mathbb{P}(Z_U^{rL}\ge1)}{r\mathbb{P}(Z_U^L\ge1)}
\ge  1-\mathbb{P}(Z_U^{rL}\ge1)-\phi(\Delta)-2\!\left(\frac{L}\Delta\right)^{\gamma_0}.
$$
\end{lemma}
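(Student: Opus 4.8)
The plan is to sandwich the ratio $\mathbb{P}(Z_U^{rL}\ge1)/\bigl(r\,\mathbb{P}(Z_U^L\ge1)\bigr)$ between two bounds, the upper one being an easy subadditivity estimate and the lower one coming from a $\phi$-mixing decorrelation of $r$ disjoint time blocks of length $L$. For the upper bound, write $E_{rL}=\{Z_U^{rL}\ge1\}=\bigcup_{i=0}^{r-1}T^{-iL}E_L$ where $E_L=\{Z_U^L\ge1\}$; by the union bound and $T$-invariance, $\mu(E_{rL})\le r\,\mu(E_L)$, which gives the ``$1\ge$'' half immediately.

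For the lower bound, the idea is to insert a gap of length $\Delta$ at the start of each block so that the blocks decorrelate. Concretely, I would compare $E_{rL}$ with the union $F=\bigcup_{i=0}^{r-1}T^{-iL}E_{L-\Delta}$ of ``shortened'' blocks; since $E_{L-\Delta}\subset E_L$ and the shortened copies sit inside disjoint windows $[iL,\,iL+L-\Delta)$ separated by gaps of length $\Delta$, Bonferroni (inclusion–exclusion truncated at second order) gives
\begin{eqnarray*}
\mu(E_{rL})\ge\mu(F)\ge\sum_{i=0}^{r-1}\mu(T^{-iL}E_{L-\Delta})-\sum_{i<j}\mu\bigl(T^{-iL}E_{L-\Delta}\cap T^{-jL}E_{L-\Delta}\bigr).
\end{eqnarray*}
By $T$-invariance each term in the first sum is $\mu(E_{L-\Delta})$, so the first sum is $r\,\mu(E_{L-\Delta})$. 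For the cross terms, the $\phi$-mixing property applied across the gap of length $\ge\Delta$ bounds each by $\mu(E_{L-\Delta})\bigl(\mu(E_{(j-i)L})+\phi(\Delta)\bigr)$ — here I would be a little careful and bound the ``future'' event by the event $\{Z_U^{rL}\ge1\}$ to get a uniform factor — so that $\sum_{i<j}(\cdots)\le r\,\mu(E_{L-\Delta})\bigl(\mathbb{P}(Z_U^{rL}\ge1)+\phi(\Delta)\bigr)$. Dividing by $r\,\mu(E_L)$ yields
\begin{eqnarray*}
\frac{\mathbb{P}(Z_U^{rL}\ge1)}{r\,\mathbb{P}(Z_U^L\ge1)}\ge\frac{\mu(E_{L-\Delta})}{\mu(E_L)}\Bigl(1-\mathbb{P}(Z_U^{rL}\ge1)-\phi(\Delta)\Bigr).
\end{eqnarray*}

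The remaining task, and the main technical point, is to control the deficit ratio $\mu(E_{L-\Delta})/\mu(E_L)$ from below by $1-2(L/\Delta)^{\gamma_0}$. This is where Lemma~\ref{lower.bound} enters: it gives a \emph{polynomial} growth rate $r^{\gamma_0}$ for $\mathbb{P}(Z_U^{r\Delta}\ge1)$ in terms of $\mathbb{P}(Z_U^\Delta\ge1)$, and running the comparison the other way (or equivalently partitioning $[0,L)$ into roughly $L/\Delta$ blocks of length $\Delta$ and applying subadditivity together with the reverse polynomial estimate) bounds the ``missing'' part $\mu(E_L\setminus E_{L-\Delta})$, which is covered by the single block $\{Z_U^{[L-\Delta,L)}\ge1\}=T^{-(L-\Delta)}E_\Delta$ of length $\Delta$, by something like $\mu(E_\Delta)\le 2(\Delta/L)^{\gamma_0}\mu(E_L)$ — i.e.\ the small block carries only a $(\Delta/L)^{\gamma_0}$ fraction of the mass of the long block. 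Wait — I should double-check the direction: the inequality to prove has $(L/\Delta)^{\gamma_0}$, which is $\ge1$, so the bound is only informative once $\mathbb{P}(Z_U^{rL}\ge1)+\phi(\Delta)$ is already tiny; thus I would apply Lemma~\ref{lower.bound} with the roles arranged so that $\mathbb{P}(E_L)\ge\tfrac12(L/\Delta)^{\gamma_0}\mathbb{P}(E_\Delta)$, giving $\mu(E_\Delta)/\mu(E_L)\le2(\Delta/L)^{\gamma_0}$; then $\mu(E_{L-\Delta})/\mu(E_L)\ge1-\mu(E_\Delta)/\mu(E_L)\ge1-2(\Delta/L)^{\gamma_0}\ge1-2(L/\Delta)^{\gamma_0}$ holds trivially, but one wants the sharper $1-2(\Delta/L)^{\gamma_0}$; I suspect the stated $(L/\Delta)^{\gamma_0}$ should be read with the understanding $\gamma_0<1$ or is simply a safe over-estimate. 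Combining the two displayed bounds and using $1-a-b-c\ge(1-a)(1-b-c)$ for the product of the two factors (since all quantities are in $[0,1)$) then gives exactly the claimed
\begin{eqnarray*}
\frac{\mathbb{P}(Z_U^{rL}\ge1)}{r\,\mathbb{P}(Z_U^L\ge1)}\ge1-\mathbb{P}(Z_U^{rL}\ge1)-\phi(\Delta)-2\Bigl(\frac{L}{\Delta}\Bigr)^{\gamma_0},
\end{eqnarray*}
and the hypotheses $\phi(\Delta),\mathbb{P}(Z_U^{rL}\ge1)<\gamma/2$ are precisely what is needed to invoke Lemma~\ref{lower.bound} (so that $\phi(\gamma\Delta)\le\phi(\Delta)<\gamma/2$ and $\mathbb{P}(Z_U^{r\Delta}\ge1)\le\mathbb{P}(Z_U^{rL}\ge1)<\gamma/2$).

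The step I expect to be the main obstacle is making the deficit estimate $\mu(E_L\setminus E_{L-\Delta})$ versus $\mu(E_L)$ genuinely give the polynomial factor rather than a crude constant: the naive bound $\mu(E_L\setminus E_{L-\Delta})\le\mu(E_\Delta)$ combined with a naive $\mu(E_L)\le(L/\Delta)\mu(E_\Delta)$ only yields a factor $\Delta/L$, not $(\Delta/L)^{\gamma_0}$, and in fact we need to go the \emph{other} direction — from a lower bound on $\mu(E_L)$ in terms of $\mu(E_\Delta)$ — which is exactly the content of Lemma~\ref{lower.bound} and is why that lemma was proved first. Getting the constants ($\tfrac12$, the $2$, the exact exponent) to line up will require care, but no new idea beyond the dyadic doubling argument already in hand.
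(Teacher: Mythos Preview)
Your strategy is essentially the paper's, but one step does not go through as written. With straight second-order Bonferroni you have $\binom{r}{2}$ pairwise intersections, each bounded via $\phi$-mixing by $\mu(E_{L-\Delta})\bigl(\mu(E_{L-\Delta})+\phi(\Delta)\bigr)$; summing gives a term of order $r^2\mu(E_{L-\Delta})\phi(\Delta)$, and after dividing by $r\mu(E_L)$ you are left with $r\phi(\Delta)$ rather than $\phi(\Delta)$. Your parenthetical instinct---replace the future by the single event $\{Z_U^{rL}\ge1\}$---is exactly right, but it does not plug into the sum $\sum_{i<j}$. What works is disjointification: write $\mu(F)=\sum_i\mu\bigl(A_i\setminus\bigcup_{j>i}A_j\bigr)\ge\sum_i\bigl[\mu(A_i)-\mu(A_i\cap\bigcup_{j>i}A_j)\bigr]$ and bound $\bigcup_{j>i}A_j\subset T^{-(i+1)L}E_{rL}$ as one event across a gap $\ge\Delta$. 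This yields only $r$ correction terms and hence the claimed $r\mu(E_{L-\Delta})\bigl(1-\mathbb{P}(Z_U^{rL}\ge1)-\phi(\Delta)\bigr)$.

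The paper does exactly this disjointification, phrased as ``partition $\{Z_U^{rL}\ge1\}$ according to the last block $j$ containing a hit'': by invariance each piece has measure $\mathbb{P}\bigl(Z_U^L\ge1,\ \tau_U\circ T^{L-1}>(r-j)L\bigr)$, and then $\phi$-mixing is applied once per piece after inserting a gap $\Delta$ \emph{after} the block (rather than shortening the block beforehand as you do). Both routes produce the same residual term $\mathbb{P}(Z_U^\Delta\ge1)/\mathbb{P}(Z_U^L\ge1)$---in your version it comes from $\mu(E_L\setminus E_{L-\Delta})\le\mu(E_\Delta)$, in the paper's from the $\mathcal{O}(\mathbb{P}(\tau_U<\Delta))$ incurred when opening the gap---and both finish by invoking Lemma~\ref{lower.bound}.

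On the exponent: your suspicion is correct. Lemma~\ref{lower.bound} gives $\mathbb{P}(E_L)\ge\tfrac12(L/\Delta)^{\gamma_0}\mathbb{P}(E_\Delta)$, hence $\mathbb{P}(E_\Delta)/\mathbb{P}(E_L)\le2(\Delta/L)^{\gamma_0}$, which is small. The printed $2(L/\Delta)^{\gamma_0}$ is a typo (as stated the lower bound would be $\le-1$, vacuous); the subsequent applications in Lemma~\ref{lower.bound2} (where $2\cdot10^{-\gamma_0}<\tfrac14$ is used) and in the proof of Theorem~\ref{sequence.theorem} (where the term appears as $4\Delta/L$) confirm that $(\Delta/L)^{\gamma_0}$ is intended. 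So your ``sharper'' reading is the right one, and there is no extra work to do there.
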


\begin{proof} Let us note that the upper bound is trivial. For the lower bound
we get by invariance of the measure
\begin{eqnarray*}
\mathbb{P}(Z_{U}^{rL}\ge 1)
&=&\sum_{j=1}^{r_n}\mathbb{P}(Z_U^{(j-1)L}\ge1,Z_U^L\circ T^{(j-1)L}\ge1, \tau_{U}\circ T^{jL-1}>(r_n-j)L)\\
&=&\sum_{j=1}^{r}\mathbb{P}(Z_U^L\ge1, \tau_{U}\circ T^{L-1}>(r_n-j)L)\\
&=&\sum_{k=0}^{r-1}\mathbb{P}(Z_U^L\ge1, \tau_{U}\circ T^{L-1}>kL)
\end{eqnarray*}
where by the $\phi$-mixing property for some gap $\Delta<L$
\begin{eqnarray*}
\mathbb{P}(Z_U^L\ge1, \tau_{U}\circ T^{L-1}>kL)
&=&\mathbb{P}(Z_U^L\ge1, \tau_{U}\circ T^{L-1+\Delta}>kL-\Delta)+\mathcal{O}(\mathbb{P}(\tau_{U}<\Delta))\\
&=&\mathbb{P}(Z_U^L\ge1)\!\left(\mathbb{P}( \tau_{U}>kL-\Delta)+\mathcal{O}(\phi(\Delta)\right)+\mathcal{O}(\mathbb{P}(\tau_{U}<\Delta)).
\end{eqnarray*}
Since
$$
\mathbb{P}(\tau_{U}>kL-\Delta)=1-\mathbb{P}(Z_U^{kL-\Delta}\ge1)\ge1-\mathbb{P}(Z_U^{rL}\ge1)
$$
we get
\begin{eqnarray*}
\mathbb{P}(Z_{U}^{rL}\ge 1)
&\ge&r\mathbb{P}(Z_U^L\ge1)\!\left(1-\mathbb{P}(Z_U^{rL}\ge1)-\phi(\Delta)\right)-r\mathbb{P}(\tau_{U}<\Delta)
\end{eqnarray*}
and therefore
$$
\frac{\mathbb{P}(Z_U^{rL}\ge1)}{r\mathbb{P}(Z_U^L\ge1)}
\ge 1-\mathbb{P}(Z_U^{rL}\ge1)-\phi(\Delta)-\frac{\mathbb{P}(Z_U^\Delta\ge1)}{\mathbb{P}(Z_U^L\ge1)}
$$
where by Lemma~\ref{lower.bound} 
$\frac{\mathbb{P}(Z_U^\Delta\ge1)}{\mathbb{P}(Z_U^L\ge1)}< 2\!\left(\frac{L}\Delta\right)^{\gamma_0}$ 
(assuming $L$ is an integer multiple of $\Delta$), where $\gamma_0=\frac{\log(2-\gamma)}{\log(2+\gamma)}$.
\end{proof}

Thus lemma allows us now to improve on Lemma~\ref{lower.bound}.

\begin{lemma} \label{lower.bound2}
Assume $\mu$ is $\phi$-mixing and let $\gamma\in(0,\frac12)$. Then for all $r$, $L$ and $\Delta<L/10$ satisfying
$\phi(\Delta), \mathbb{P}(Z_U^{rL}\ge1)<\frac\gamma2$ 
one has ($\gamma_0=\frac{\log(2-\gamma)}{\log(2+\gamma)}$)
$$
r\mathbb{P}(Z_U^L\ge1)
\le  4\mathbb{P}(Z_U^{rL}\ge1).
$$
\end{lemma}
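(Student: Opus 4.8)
The plan is to bootstrap Lemma~\ref{ratio.bound} by choosing the gap $\Delta$ small enough that the error terms there collectively drop below $\frac34$, which converts the (possibly weak) lower bound on $\frac{\mathbb{P}(Z_U^{rL}\ge1)}{r\mathbb{P}(Z_U^L\ge1)}$ into the clean bound $r\mathbb{P}(Z_U^L\ge1)\le 4\mathbb{P}(Z_U^{rL}\ge1)$. First I would recall that Lemma~\ref{ratio.bound} gives, under the hypotheses $\phi(\Delta)<\frac\gamma2$ and $\mathbb{P}(Z_U^{rL}\ge1)<\frac\gamma2$,
$$
\frac{\mathbb{P}(Z_U^{rL}\ge1)}{r\mathbb{P}(Z_U^L\ge1)}
\ge 1-\mathbb{P}(Z_U^{rL}\ge1)-\phi(\Delta)-2\!\left(\frac{L}\Delta\right)^{\gamma_0}.
$$
Since $\gamma\in(0,\frac12)$ we have $\mathbb{P}(Z_U^{rL}\ge1)<\frac14$ and $\phi(\Delta)<\frac14$, so those two terms together contribute at most $\frac12$; it therefore suffices to arrange $2(L/\Delta)^{\gamma_0}\le\frac14$, i.e.\ $(L/\Delta)^{\gamma_0}\le\frac18$. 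But here is the catch: $\gamma_0<1$ and $L/\Delta>1$, so $(L/\Delta)^{\gamma_0}>1$ always — the bound as literally stated cannot be driven below $\frac14$ by shrinking $\Delta$. This tells me the roles of $L$ and $\Delta$ must be thought of the other way: one applies Lemma~\ref{ratio.bound} not with the original block length $L$ but after first replacing $L$ by a \emph{shorter} block, so that the ratio $L/\Delta$ in the error is really (short block)/(even shorter gap), which one keeps bounded, while the telescoping is done over many short blocks.

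Concretely, the mechanism I would use is the following two-scale argument. Fix an auxiliary integer $m$ (to be chosen as a large absolute constant, e.g.\ $m=10$, matching the hypothesis $\Delta<L/10$) and write $L=m\ell$ roughly, so $\ell\approx L/10$. Apply Lemma~\ref{ratio.bound} in the form relating a block of length $L=m\ell$ to blocks of length $\ell$, using gap $\Delta$ with $\Delta<\ell$; then $L/\Delta=m\ell/\Delta$, and choosing $\Delta$ a fixed fraction of $\ell$ makes $(L/\Delta)^{\gamma_0}$ a bounded constant close to $m^{\gamma_0}<m$. This still does not beat $\frac14$ by itself, but now I can instead use the \emph{reverse} direction: Lemma~\ref{lower.bound} already gives $\mathbb{P}(Z_U^{r\Delta}\ge1)\ge\frac12 r^{\gamma_0}\mathbb{P}(Z_U^\Delta\ge1)$, and combining the near-subadditivity $\mathbb{P}(Z_U^{rL}\ge1)\le r\mathbb{P}(Z_U^L\ge1)$ (the trivial upper bound) with the near-superadditivity coming from Lemma~\ref{ratio.bound} pins the ratio into a window. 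The honest way to get the constant $4$: iterate Lemma~\ref{ratio.bound} across the scales $\Delta\mid L\mid rL$ so the small errors compound only additively (a geometric-type sum in the number of doubling steps), and absorb the cumulative error — which is $O(\phi(\Delta)\log r)+O(\mathbb{P}(Z_U^{rL}\ge1))+O((L/\Delta)^{\gamma_0}/\text{something})$ — into the slack between $\frac14$ and $1$; then $\frac{r\mathbb{P}(Z_U^L\ge1)}{\mathbb{P}(Z_U^{rL}\ge1)}\le\frac1{1-\frac34}=4$.

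The key steps in order: (1) invoke the trivial upper bound $\mathbb{P}(Z_U^{rL}\ge1)\le r\mathbb{P}(Z_U^L\ge1)$ to orient the inequality; (2) apply Lemma~\ref{ratio.bound} with the stated gap $\Delta<L/10$, getting $\frac{\mathbb{P}(Z_U^{rL}\ge1)}{r\mathbb{P}(Z_U^L\ge1)}\ge 1-\mathbb{P}(Z_U^{rL}\ge1)-\phi(\Delta)-2(L/\Delta)^{\gamma_0}$; (3) bound $\mathbb{P}(Z_U^{rL}\ge1)<\gamma/2<1/4$ and $\phi(\Delta)<\gamma/2<1/4$ by hypothesis; (4) use $\Delta<L/10$ together with Lemma~\ref{lower.bound} applied at scale $\Delta$ (rather than the crude $(L/\Delta)^{\gamma_0}$) to see that $\frac{\mathbb{P}(Z_U^\Delta\ge1)}{\mathbb{P}(Z_U^L\ge1)}$ is genuinely small — here the point is that Lemma~\ref{lower.bound} gives a lower bound on $\mathbb{P}(Z_U^{10\Delta}\ge1)$ of order $10^{\gamma_0}\mathbb{P}(Z_U^\Delta\ge1)\approx 10\,\mathbb{P}(Z_U^\Delta\ge1)$, so $\mathbb{P}(Z_U^\Delta\ge1)\lesssim\frac1{10}\mathbb{P}(Z_U^L\ge1)$ and hence the last error term is at most roughly $2/10<1/4$; (5) add up: $1-\frac14-\frac14-\frac14=\frac14$, so $\mathbb{P}(Z_U^{rL}\ge1)\ge\frac14 r\mathbb{P}(Z_U^L\ge1)$, which is the claim.

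The main obstacle is step (4): one must make rigorous the claim that the $2(L/\Delta)^{\gamma_0}$ term in Lemma~\ref{ratio.bound} is actually being overcounted and can be replaced, under the extra hypothesis $\Delta<L/10$, by something like $2\cdot 10^{-1+o(1)}$. This requires re-running the proof of Lemma~\ref{ratio.bound}'s last step: there one wrote $\frac{\mathbb{P}(Z_U^\Delta\ge1)}{\mathbb{P}(Z_U^L\ge1)}< 2(L/\Delta)^{\gamma_0}$ \emph{via} Lemma~\ref{lower.bound} read as $\mathbb{P}(Z_U^L\ge1)\ge\frac12(L/\Delta)^{\gamma_0}\mathbb{P}(Z_U^\Delta\ge1)$ — wait, that gives an \emph{upper} bound $\frac{\mathbb{P}(Z_U^\Delta\ge1)}{\mathbb{P}(Z_U^L\ge1)}\le 2(L/\Delta)^{-\gamma_0}$, which \emph{does} go to zero as $L/\Delta\to\infty$. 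So the resolution is simply that the exponent in Lemma~\ref{ratio.bound} should be read with the correct sign, $2(\Delta/L)^{\gamma_0}$, and with $\Delta<L/10$ this is $\le 2\cdot 10^{-\gamma_0}<2\cdot 10^{-0.9}<\frac14$ once $\gamma$ is small. Then the four quarters add to at most $\frac14$ short of $1$, giving the factor $4$, and no iteration over scales is even needed — the single application of Lemma~\ref{ratio.bound} plus the hypothesis $\Delta<L/10$ suffices. I would double-check the direction of this last inequality against the proof of Lemma~\ref{ratio.bound} before committing, since that sign is the whole game.
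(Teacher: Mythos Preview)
Your final resolution is exactly the paper's proof: a single application of Lemma~\ref{ratio.bound}, using $\Delta<L/10$ so that the last error term is $\le 2\cdot 10^{-\gamma_0}$, and bounding the first two error terms by $\gamma/2$ each. Your suspicion about the sign is correct---the displayed error term in the \emph{statement} of Lemma~\ref{ratio.bound} should read $2(\Delta/L)^{\gamma_0}$ (as its own proof shows, via Lemma~\ref{lower.bound}), and the paper's one-line proof of Lemma~\ref{lower.bound2} uses exactly this corrected form. The detours through a two-scale argument and iteration over scales are unnecessary; once the sign is right, nothing more is needed.
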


\begin{proof}
This follows from Lemma~\ref{ratio.bound} since with the choice of $\gamma<\frac12$ one has
 $2(L/\Delta)^{\gamma_0}<2 \cdot10^{-\gamma_0}<\frac14$.
\end{proof}

\begin{lemma}\label{k-ratio}
Assume $\mu$ is $\phi$-mixing and let $\gamma>0$ and $\beta\in(0,1)$. Assume that 
$\phi(\gamma L^\beta), \mathbb{P}(Z_U^{rL}\ge1)<\frac\gamma2$.
Then for any $\beta\in(0,1)$ there exists a constant $C$ so that 
$$
\left|\frac{\mathbb{P}(Z_U^{rL}=k)}{r\mathbb{P}(Z_U^L=k)}-1\right|,\left|\frac{r\mathbb{P}(Z_U^L=k)}{\mathbb{P}(Z_U^{rL}=k)}-1\right|
\le C\!\left(\mathbb{P}(Z_U^{rL}\ge1)+\phi(L^\beta)+L^{-(1-\beta)}\frac1{\lambda_k(rL,U)}\right).
$$
\end{lemma}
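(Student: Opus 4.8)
The plan is to mimic the decomposition used in the proof of Lemma~\ref{ratio.bound}, but now tracking the exact number of visits $k$ rather than just the event $\{Z_U^{rL}\ge1\}$. First I would write, for the full orbit segment of length $rL$, a ``last block'' decomposition: partition according to the first sub-block $[(j-1)L,jL)$ of length $L$ in which exactly $k$ hits occur \emph{and} no further hits occur afterwards. By invariance this gives
\[
\mathbb{P}(Z_U^{rL}=k)=\sum_{k'=0}^{r-1}\mathbb{P}\!\left(Z_U^L=k,\ \tau_U\circ T^{L-1}>k'L\right),
\]
exactly as in Lemma~\ref{ratio.bound} but with $Z_U^L\ge1$ replaced by $Z_U^L=k$. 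Since $\{Z_U^L=k\}\in\sigma(\mathcal{A}^{L})$ and $\{\tau_U>k'L-\Delta\}$ lives on the shifted $\sigma$-algebra separated by a gap $\Delta$, the $\phi$-mixing property lets me factorize each summand up to errors $\mathcal{O}(\phi(\Delta)\mathbb{P}(Z_U^L=k))$ and $\mathcal{O}(\mathbb{P}(\tau_U<\Delta))$, just as before. Summing the $k'$ from $0$ to $r-1$ and using $\mathbb{P}(\tau_U>k'L-\Delta)\ge 1-\mathbb{P}(Z_U^{rL}\ge1)$ produces
\[
\frac{\mathbb{P}(Z_U^{rL}=k)}{r\mathbb{P}(Z_U^L=k)}\ge 1-\mathbb{P}(Z_U^{rL}\ge1)-\phi(\Delta)-\frac{r\,\mathbb{P}(\tau_U<\Delta)}{r\,\mathbb{P}(Z_U^L=k)},
\]
and the last term I would rewrite as $\frac{\mathbb{P}(Z_U^\Delta\ge1)}{\mathbb{P}(Z_U^L=k)}$.

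The step that needs the new hypothesis $\lambda_k\ne0$ is controlling that last ratio $\mathbb{P}(Z_U^\Delta\ge1)/\mathbb{P}(Z_U^L=k)$. In Lemma~\ref{ratio.bound} the denominator was $\mathbb{P}(Z_U^L\ge1)$ and Lemma~\ref{lower.bound} bounded $\mathbb{P}(Z_U^\Delta\ge1)/\mathbb{P}(Z_U^L\ge1)$ by $2(L/\Delta)^{\gamma_0}$. Here I would first apply that same bound to get $\mathbb{P}(Z_U^\Delta\ge1)\le 2(L/\Delta)^{\gamma_0}\mathbb{P}(Z_U^L\ge1)$, and then convert $\mathbb{P}(Z_U^L\ge1)$ to $\mathbb{P}(Z_U^L=k)$ via the definition
\[
\lambda_k(L,U)=\frac{\mu(Z_U^L=k)}{\mu(\tau_U\le L)}=\frac{\mathbb{P}(Z_U^L=k)}{\mathbb{P}(Z_U^L\ge1)},
\]
so that $\mathbb{P}(Z_U^\Delta\ge1)/\mathbb{P}(Z_U^L=k)\le 2(L/\Delta)^{\gamma_0}/\lambda_k(L,U)$. (I will need a comparison between $\lambda_k(L,U)$ and $\lambda_k(rL,U)$; since $r\mathbb{P}(Z_U^L\ge1)$ and $\mathbb{P}(Z_U^{rL}\ge1)$ are comparable up to a factor $4$ by Lemma~\ref{lower.bound2}, and the numerators $r\mathbb{P}(Z_U^L=k)$ and $\mathbb{P}(Z_U^{rL}=k)$ are comparable by the estimate we are proving, these two $\lambda_k$'s differ by a bounded factor, which gets absorbed into $C$.) Choosing the gap $\Delta=L^\beta$ makes $(L/\Delta)^{\gamma_0}=L^{-(1-\beta)\gamma_0}$, and absorbing $\gamma_0<1$ into the constant yields the $L^{-(1-\beta)}\lambda_k(rL,U)^{-1}$ term in the statement.

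For the reverse inequality $r\mathbb{P}(Z_U^L=k)/\mathbb{P}(Z_U^{rL}=k)\le 1+(\cdots)$ I would run the same decomposition from the other side: bound each summand $\mathbb{P}(Z_U^L=k,\tau_U\circ T^{L-1}>k'L)$ from below by factorizing with the $\phi$-mixing inequality in the opposite direction and using $\mathbb{P}(\tau_U>k'L-\Delta)\le 1$, which immediately gives $\mathbb{P}(Z_U^{rL}=k)\ge r\mathbb{P}(Z_U^L=k)(1-\phi(\Delta))-r\mathbb{P}(\tau_U<\Delta)$; rearranging and using the same bound on $\mathbb{P}(Z_U^\Delta\ge1)$ relative to $\mathbb{P}(Z_U^L=k)$ closes the loop. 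Finally, since for $x$ small $1/(1-x)\le1+2x$, the one-sided bounds on the ratio and its reciprocal combine into the two-sided estimate $|\mathbb{P}(Z_U^{rL}=k)/(r\mathbb{P}(Z_U^L=k))-1|\le C(\cdots)$ and likewise for the reciprocal. The main obstacle I anticipate is bookkeeping: making sure the ``last block with exactly $k$ hits'' decomposition is genuinely a disjoint partition (one must insist that \emph{all} hits in $[(j-1)L,jL)$ and none after — there is a subtlety if a cluster straddles a block boundary, so I would either choose $L$ much larger than the maximal cluster length or insert a short buffer, exactly the role the gap $\Delta$ already plays), and tracking that the factor relating $\lambda_k(L,U)$ to $\lambda_k(rL,U)$ stays bounded uniformly in $k$ under the standing hypothesis $\lambda_k\ne0$.
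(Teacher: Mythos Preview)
Your proposed identity
\[
\mathbb{P}(Z_U^{rL}=k)=\sum_{k'=0}^{r-1}\mathbb{P}\!\left(Z_U^L=k,\ \tau_U\circ T^{L-1}>k'L\right)
\]
is false. The last-block decomposition in Lemma~\ref{ratio.bound} works for $\{Z_U^{rL}\ge1\}$ because ``the last block containing a hit'' is well-defined and, after shifting by invariance, the only constraint on that block is $Z_U^L\ge1$. For $\{Z_U^{rL}=k\}$, however, the $k$ hits need not lie in a single $L$-block: with $k=2$ one may have one hit in block~$3$ and one in block~$7$, a configuration your right-hand side simply does not count. Your closing caveat about ``a cluster straddling a block boundary'' underestimates the problem---the real issue is hits in \emph{well-separated} blocks---and inserting the buffer $\Delta$ does nothing for it; $\Delta$ is there to create a mixing gap, not to force all hits into one block.

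The paper's proof is built precisely around this obstruction and is structurally different from what you outline. It covers $F_s=\{Z_U^{rL}=k\}$ by the sets $G_j=F_s\cap T^{-jL/2}\{Z_U^L=k\}$ indexed by \emph{half}-integer shifts $j=0,\dots,2r-2$, so that any configuration whose $k$ hits fit inside some window of length $L$ is caught by at least one $G_j$. The complement $B_s=F_s\setminus\bigcup_j G_j$---configurations whose hits span more than $L$---forces a hit in some half-block and another hit at distance $\ge L/2$, and is bounded via $\phi$-mixing by essentially $\mathbb{P}(Z^s\ge1)\bigl(\mathbb{P}(Z^s\ge1)+\phi(L/2)\bigr)$. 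Because the $G_j$ overlap, a second ingredient is needed: $G_i\cap G_j=\varnothing$ for $|i-j|\ge2$ while $G_j\cap G_{j+1}=F_s\cap T^{-(j+1)L/2}\{Z_U^{L/2}=k\}$, so inclusion--exclusion gives $\mathbb{P}(\bigcup_j G_j)=(2r-1)\mathbb{P}(F_L)-(2r-2)\mathbb{P}(F_{L/2})=r\mathbb{P}(F_L)+(r-1)\bigl(\mathbb{P}(F_L)-2\mathbb{P}(F_{L/2})\bigr)$, and a separate mixing estimate shows $|\mathbb{P}(F_L)-2\mathbb{P}(F_{L/2})|$ is small. This overlapping-window/bad-set/inclusion--exclusion argument is the missing idea in your proposal.
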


\begin{proof}
Put $s=rL$, $F_L=\{Z^L=k\}$ ($Z^L=Z_U^L$) and for $j=0,1,\dots,2r-2$ we put 
$$
G_j=F_s\cap T^{-jL/2}F_L
$$
for the good set where the $k$-cluster is located on an orbit segment of length $L$ beginning
at half integer multiples of its length. The we introduce the bad set 
$B_s=F_s\setminus\bigcup_{j=0}^{2r-2}G_j$ and note that $B_s=\varnothing$ if $k=1$.
From now on we therefore assume that $k\ge2$ and note that if $x\in F_s\setminus\bigcup_{j=0}^{2r-2}G_j$ 
then there exists a smallest 
$j<2r-2$ such that $x\in T^{-jL/2}\{Z^{L/2}\ge1\}$ and  $x\not\in T^{-jL/2}F_L$ which implies
$x\in T^{-(j+2)L/2}\{Z^{2r-j-2)L/2}\ge1\}$.
Hence by the $\phi$-mixing property we get
\begin{eqnarray*}
\mathbb{P}(B_s)
&\le&\sum_{j=0}^{2r-2}\mathbb{P}(Z^{L/2}\circ T^{jL/2}\ge1, Z^{(2r-j-2)L/2}\circ T^{(j+2)L/2}\ge1)\\
&\le&\sum_{j=0}^{2r-2}\mathbb{P}(Z^{L/2}\ge1)\left(\mathbb{P}(Z^{(2r-j-2)L/2}\ge1)+\phi(L/2)\right)\\
&\le&2r\mathbb{P}(Z^{L/2}\ge1)\left(\mathbb{P}(Z^{s}\ge1)+\phi(L/2)\right).
\end{eqnarray*}
Since by Lemma~\ref{lower.bound2}
$$
2r\mathbb{P}(Z^{L/2}\ge1)\le 4\mathbb{P}(Z^s\ge1),
$$
 (provided $\phi(\Delta),\mathbb{P}(Z^\Delta\ge1)<\frac14$) we get
$$
\left|\mathbb{P}(F_s)-\mathbb{P}\!\left(\bigcup_{j=0}^{2r-2}G_j\right)\right|
\le4\mathbb{P}(Z^{s}\ge1)\left(\mathbb{P}(Z^{s}\ge1)+\phi(L/2)\right)
$$
Now notice that 
$$
x\in\bigcup_{j=0}^{2r-2}G_j \Rightarrow
\begin{cases} \mbox{ either } x\in \bigcup_{j=0}^r G_{2j}\\
 \mbox{ or } x\in \bigcup_{j=0}^r G_{2j+1}\\
 \mbox{ or both }
 \end{cases}.
$$
Since $G_j\cap G_i=\varnothing$ for all $|i-j|\ge2$ and $G_j\cap G_{j+1}= F_s\cap T^{-(j+1)L/2}F_{L/2}$
 we get by Bonferroni
 \begin{eqnarray*}
 \mathbb{P}\!\left(\bigcup_{j=0}^{2r-2}G_j\right)
 &=&\sum_{j=0}^{2r-2}\mathbb{P}(G_j)-\sum_{j=0}^{2r-3}\mathbb{P}(G_j\cap G_{j+1})\\
 &=&(2r-1)\mathbb{P}(F_L)-(2r-2)\mathbb(F_{L/2})\\
 &=&r\mathbb{P}(F_L)+(r-1)\!\left(\mathbb{P}(F_L)-2\mathbb{P}(F_{L/2})\right).
 \end{eqnarray*}
 To estimate the second term on the RHS note that 
 \begin{eqnarray*}
 \left|\mathbb{P}(F_L)-\mathbb{P}(F_{(L-\Delta)/2}\cup T^{-(L+\Delta)/2}F_{(L-\Delta)/2})\right|\hspace{-3cm}&&\\
 &\le&\mathbb{P}(Z^\Delta\ge1)+\mathbb{P}(Z^{(L-\Delta)/2}\ge1,Z^{(L-\Delta)/2}\circ T^{(L-\Delta)/2}\ge1)\\
 &\le&\mathbb{P}(Z^\Delta\ge1)+\mathbb{P}(Z^{(L-\Delta)/2}\ge1)\!\left(\mathbb{P}(Z^{(L-\Delta)/2}\ge1)+\phi(\Delta)\right)
 \end{eqnarray*}
 by the $\phi$-mixing property. Then
 \begin{eqnarray*}
 \left|\mathbb{P}(F_{(L-\Delta)/2}\cup T^{-(L+\Delta)/2}F_{(L-\Delta)/2})-2\mathbb{P}(F_{(L-\Delta)/2})\right|\hspace{-3cm}&&\\
 &=&\mathbb{P}(F_{(L-\Delta)/2}\cap T^{-(L+\Delta)/2}F_{(L-\Delta)/2})\\
  &\le&\mathbb{P}(Z^{(L-\Delta)/2}\ge1,  Z^{(L-\Delta)/2}\circ T^{-(L+\Delta)/2}\ge1)\\
  &\le&\mathbb{P}(Z^{(L-\Delta)/2}\ge1)\!\left(\mathbb{P}(  Z^{(L-\Delta)/2}\ge1)+\phi(\Delta)\right)
 \end{eqnarray*}
 again using the mixing property. Since also
 $$
  \left|\mathbb{P}(F_{(L-\Delta)/2})-\mathbb{P}(F_{L})\right|
 \le\mathbb{P}(Z^{\Delta/2}\ge1)
  \le\mathbb{P}(Z^{\Delta}\ge1)
  $$
  we get in totem that
 \begin{eqnarray*}
\left|\mathbb{P}(F_L)-2\mathbb{P}(F_{L/2})\right|
&\le&2 \mathbb{P}(Z^\Delta\ge1) +2\mathbb{P}(Z^{L}\ge1)\!\left(\mathbb{P}( Z^{L}\ge1)+\phi(\Delta)\right).
 \end{eqnarray*}
Now, continuing with Lemma~\ref{lower.bound2} let $\gamma>0$ then
 \begin{eqnarray*}
 \left|\mathbb{P}\!\left(\bigcup_{j=0}^{2r-2}G_j\right)-r\mathbb{P}(F_L)\right|
 &\le& r\mathbb{P}(Z^\Delta\ge1)+r\mathbb{P}(Z^{L/2}\ge1)\!\left(\mathbb{P}(Z^{L/2}\ge1)+\phi(\Delta)\right)\\
  &\le&4 \mathbb{P}(Z^{r\Delta}\ge1)
  +2\mathbb{P}(Z^{s}\ge1)\!\left(\mathbb{P}(Z^{s}\ge1)+\phi(\Delta)\right)
 \end{eqnarray*}
provided $\phi(\gamma\Delta),\mathbb{P}(Z^s\ge1)<\frac\gamma2$.
   We have thus shown that
  $$
  \left|\mathbb{P}(Z^s=k)-r\mathbb{P}(Z^L=k)\right|
  \lesssim\mathbb{P}(Z^s\ge1)\!\left(\mathbb{P}(Z^s\ge1)+\phi(L/2)+\phi(\Delta)+\mathbb{P}(Z^{r\Delta}\ge1)\right),
  $$
  and if we put $\Delta=\frac12L^\beta$ for some $\beta\in(0,1)$ then
  $$
  \mathbb{P}(Z^{r\Delta}\ge1)=\mathbb{P}(Z^{sL^{\beta-1}}\ge1)\lesssim \frac1{L^{1-\beta}}\mathbb{P}(Z^s\ge1).
  $$
  (by  Lemma~\ref{lower.bound2} again) and consequently
   $$
  \left|\mathbb{P}(Z^s=k)-r\mathbb{P}(Z^L=k)\right|
  \lesssim\mathbb{P}(Z^s\ge1)\!\left(\mathbb{P}(Z^s\ge1)+\phi(L^\beta)+\frac1{L^{1-\beta}}\mathbb{P}(Z^{s}\ge1)\right).
  $$
Thus
  $$
  \left|\frac{r\mathbb{P}(Z^L=k)}{\mathbb{P}(Z^s=k)}-1\right|
  \lesssim \mathbb{P}(Z^s\ge1)+\phi(L^\beta)+\frac1{L^{1-\beta}}\frac{\mathbb{P}(Z^s\ge1)}{\mathbb{P}(Z^s=k)}
  $$
  which implies the stated estimate. The other estimate is obtained in the same way with an additional
  application of Lemma~\ref{lower.bound2}
\end{proof}

\begin{proof}[Proof of Theorem~\ref{sequence.theorem}]
Let $s_n\to\infty$ be a sequence so that $\mathbb{P}(Z_{U_n}^{s_n}\ge1)\to 0$ as $n\to\infty$.
Assume the limit
$$
\hat\lambda_k=\lim_{n\to\infty}\hat\lambda_k(n)
$$
exists, where
$$
\hat\lambda_k(n)=\frac{\mathbb{P}(Z_{U_n}^{s_n}=k)}{\mathbb{P}(Z_{U_n}^{s_n}\ge1)}.
$$
Let $\varepsilon>0$, then there exists an $L_0$ so that $|\lambda_k-\lambda_k(L)|<\varepsilon$ 
for all $L\ge L_0$. Moreover for every $L$ there exists and $n_0(L)$ so that 
$|\lambda_k(L)-\lambda_k(L,U_n)|<\varepsilon$ for all $n\ge n_0(L)$. Similarly there exists an
$n_1$ so that $|\hat\lambda_k-\hat\lambda_k(n)|<\varepsilon$ for all $n\ge n_1$. 
Let $L\ge L_0$ and put $n_2=n_0(L)\vee n_1$ and $r_n=s_n/L$ for $n\ge n_2$.

One has by Lemma~\ref{ratio.bound} with $\beta\in(0,1)$ 
$$
\frac{\mathbb{P}(Z_{U_n}^{s_n}\ge1)}{r_n\mathbb{P}(Z_{U_n}^L\ge1)}
\ge 1-\mathbb{P}(Z_{U_n}^{s_n}\ge1)-\phi(\Delta)-\frac{4\Delta}L
\ge 1-\mathbb{P}(Z_{U_n}^{s_n}\ge1)-\phi(L^\beta)-4L^{-(1-\beta)}
$$
where $\Delta=L^\beta$  satisfies $\phi(\gamma\Delta),\mathbb{P}(Z^{s_n}\ge1)<\frac\gamma2$
for $L<\!\!< s_n$ large enough as $\mathbb{P}(Z_{U_n}^{s_n}\ge1)\to0$ by assumption.

Therefore we get by Lemmata~\ref{k-ratio} and~\ref{ratio.bound}
\begin{eqnarray*}
\hat\lambda_k(n)
&=&\frac{\mathbb{P}(Z_{U_n}^{s_n}=k)}{\mathbb{P}(Z_{U_n}^{s_n}\ge1)}\\
&=&\frac{\mathbb{P}(Z_{U_n}^{L}=k)}{\mathbb{P}(Z_{U_n}^{L}\ge1)}\frac{\mathbb{P}(Z_{U_n}^{s_n}=k)}{r_n\mathbb{P}(Z_{U_n}^{L}=k)}
\frac{r_n\mathbb{P}(Z_{U_n}^{L}\ge1)}{\mathbb{P}(Z_{U_n}^{s_n}\ge1)}\\
&=&\lambda_k(L,U_n)
\!\left(1+\mathcal{O}\!\left(\mathbb{P}(Z_{U_n}^{s_n}\ge1)+\phi(L^\beta)+L^{-(1-\beta)}\lambda_k(s_n,U_n)^{-1}\right)\right)\times\\
&&\hspace{6cm}\times\!\left(1+\mathcal{O}\!\left(\mathbb{P}(Z_{U_n}^{s_n}\ge1)+\phi(L^\beta)+L^{-(1-\beta)}\right)\right)\\
&=&\lambda_k(L,U_n)(1+\mathcal{O}(\varepsilon))
\end{eqnarray*}
as $\phi(L^\beta), L^{-(1-\beta)}, \mathbb{P}(Z_{U_n}^{s_n}\ge1)<\varepsilon $ if $L$ and $n$ are large enough.
Thus, as $\varepsilon\to0$, $\hat\lambda_k=\lambda_k$.
\end{proof}

\section{The Compound Poisson Distribution}

Suppose that $P\sim\mathrm{Pois}(\lambda)$ is a Poisson random variable, and that $X_1,X_2,\ldots$ are positive, integer-valued, independent, and identically distributed random variables that are all independent from $P$. The random variable
\begin{equation}\label{compoundpoissonrandvar}
W = \sum_{i=1}^{P}X_i
\end{equation}
is called the {\em compound Poisson random variable} with $P$ and $X_i$s.

Observe from the definition that the probability mass function of the compound Poisson random variable given in (\ref{compoundpoissonrandvar}) is given by
$$
\mu(W=k) = \sum_{i=1}^{k}\mu(P=i)\mu(S_i=k) = e^{-\lambda}\sum_{j=1}^{k} \frac{\lambda^i}{i!}\mu(S_i=k),
$$
where $S_i=\sum_{j=1}^{i}X_j$. If in addition we know that $\mu(X_j=k)=\lambda_k$, then we can write
$$
\mu(S_i=k) 
= \mu\!\left(\sum_{j=1}^i X_j=k\right)
 = \mu\!\left(\bigcup_{\sum_{j=1}^i k_j = k}\bigcap_{j=1}^i \{X_j=k_j\}\right)
 = \sum_{\sum_{j=1}^i k_j = k}\prod_{j=1}^i \lambda_{k_j},
$$
where the last equality follows from independence.

Special distributions for the random variables $X_i$s give rise to special distributions for the 
corresponding compound Poisson random variables. A notable kind is when the $X_i$s are 
geometrically distributed with parameter $\theta\in(0,1]$. In that case, the distribution of $W$ is 
characterized by the probability mass function
$$
\mu(W=k) = e^{-\lambda}\sum_{j=1}^k \theta^{k-j}(1-\theta)^j \frac{\lambda^j}{j!}\binom{k-1}{j-1},
$$
and $\mu(W=0)=e^{-\lambda}$. The random variable with this distribution is called the \emph{P\'olya-Aeppli random variable} or the \emph{geometric Poisson random variable} with parameters $\lambda$ and $\theta$.

Similarly if $B(p,n)$ is a binomially distributed random variable then
$$
W=\sum_{i=1}^B X_i
$$
is compound Binomially distributed. If $(p_j,n_j)$ are such that $n_j\to\infty $ as $j\to\infty$ in such a way 
that $p_jn_j\to t$ for some $t>0$, then the associated compound Binomial random variables $W^j$
converge in distribution to a compound Poisson variable. as the charateristic functiion of $W^j$ is
$(1-p_j(1-\varphi(z)))^{n_j}$ converges to the characteristic function of $W$, which is 
$e^{-(1-\varphi(z))}$ where $\varphi(z)=\sum_{k=0}^\infty \mathbb{P}(X_i=k)z^k$ is the 
characteristic function of the i.i.d.\ random variables $X_i$.

We can now formulate our main theorem.

\begin{theorem}\label{main.theorem}
Let $\mathcal{A}$ be a finite generating partition of $(X,\mu,\mathcal{M},T)$, and let $\mu$ be left $\phi$-mixing with 
$\phi(x)=\mathcal{O}(x^{-p})$ for some positive $p$. Suppose $(U_n)_{n=1}^{\infty}$ is a sequence of nested sets 
such that $U_n\in \sigma(\mathcal{A}^n)$ and $\mu(\bigcap_nU_n)=0$. Let $(s_n)_{n=1}^{\infty}$ be a sequence of positive numbers diverging to infinity in such a way that
$\mathbb{P}(Z_{U_n}^{s_n}\ge1)\to0$ as $n\to\infty$ and $s_n^\eta\mathbb{P}(Z_{U_n}^{s_n}\ge1)\to\infty$
for some $\eta\in(0,1)$.
Assume the limits $\hat\lambda_k=\lim_{n\to\infty}\lambda_k(s_n,U_n)$ exist for $k=1,2,\dots$.

Let $t>0$ and put $N_n=\frac{ts_n}{\mathbb{P}(Z_{U_n}^{s_n}\ge1)}$,
then 
$$
Z_{U_n}^{N_n}=\sum_{j=0}^{N_n-1}\chi_{U_n}\circ T^j
$$
 converges in distribution to a compound Poisson random variable with parameters
$t$ and $\hat\lambda_k$, $k=1,\dots$.
\end{theorem}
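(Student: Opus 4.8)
The plan is to obtain the limiting distribution of $Z_{U_n}^{N_n}$ by a blocking argument: partition the orbit segment of length $N_n$ into roughly $R_n = N_n/s_n = t/\mathbb{P}(Z_{U_n}^{s_n}\ge1)$ consecutive blocks each of length $s_n$, show that within each block the count $Z_{U_n}^{s_n}$ is (after the $\phi$-mixing corrections) distributed according to the cluster-size law $\lambda_k(s_n,U_n)$, and that distinct blocks are asymptotically independent. Since each block records at least one hit with probability $\mathbb{P}(Z_{U_n}^{s_n}\ge1)\approx t/R_n$, the number of "occupied" blocks is asymptotically $\mathrm{Pois}(t)$, and each occupied block contributes an independent cluster of size distributed as $\hat\lambda_k=\lim_n\lambda_k(s_n,U_n)$. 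This is exactly the structure of the compound Poisson variable described in Section~3, so the conclusion follows once the two approximations (within-block cluster law, between-block independence) are quantified.

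**First** I would fix $t>0$ and introduce a gap parameter $g_n = s_n^\beta$ for a suitable $\beta\in(0,1)$, writing the orbit segment of length $N_n$ as $R_n$ blocks of length $s_n - g_n$ separated by gaps of length $g_n$; the total length lost to gaps is $R_n g_n = t s_n^{\beta}/\mathbb{P}(Z_{U_n}^{s_n}\ge1)$, which is negligible compared to $N_n$ precisely because of the hypothesis $s_n^\eta\mathbb{P}(Z_{U_n}^{s_n}\ge1)\to\infty$ for some $\eta\in(0,1)$ (choose $\beta<\eta$). **Next**, using the convolution formula promised in Section~\ref{convolution} together with the iterated $\phi$-mixing estimate, I would express $\mathbb{P}(Z_{U_n}^{N_n}=k)$ as a sum over the number $i$ of occupied blocks and over compositions $k_1+\cdots+k_i=k$ of products $\prod_j \mathbb{P}(Z_{U_n}^{s_n}=k_j)$ times a combinatorial factor $\binom{R_n}{i}$ for the placement of occupied blocks among empty ones, with an error controlled by $R_n^2\phi(g_n) + R_n\,\mathbb{P}(Z_{U_n}^{s_n}\ge1)^2 + R_n\,\mathbb{P}(Z_{U_n}^{g_n}\ge1)$. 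Each of these error terms tends to zero: the first by the polynomial decay $\phi(x)=\mathcal{O}(x^{-p})$ against $R_n\sim\mathbb{P}(Z_{U_n}^{s_n}\ge1)^{-1}$ and $g_n=s_n^\beta$; the second since $R_n\mathbb{P}(Z_{U_n}^{s_n}\ge1)=t$ is bounded while $\mathbb{P}(Z_{U_n}^{s_n}\ge1)\to0$; the third by Lemma~\ref{lower.bound2}, which gives $\mathbb{P}(Z_{U_n}^{g_n}\ge1)\lesssim (g_n/s_n)^{\gamma_0}\mathbb{P}(Z_{U_n}^{s_n}\ge1)$ so the whole term is $\lesssim R_n (g_n/s_n)^{\gamma_0}\mathbb{P}(Z_{U_n}^{s_n}\ge1)\sim (s_n^{\beta-1})^{\gamma_0}\to0$.

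**Then** I would take $n\to\infty$ in this finite expansion. Writing $p_n = \mathbb{P}(Z_{U_n}^{s_n}\ge1)$, one has $\binom{R_n}{i}p_n^i(1-p_n)^{R_n-i}\to e^{-t}t^i/i!$ since $R_n p_n=t$, and the conditional cluster probabilities $\mathbb{P}(Z_{U_n}^{s_n}=k_j)/p_n = \lambda_{k_j}(s_n,U_n)\to\hat\lambda_{k_j}$ by hypothesis; assembling these gives
\begin{equation*}
\mathbb{P}(Z_{U_n}^{N_n}=k)\longrightarrow e^{-t}\sum_{i=1}^{k}\frac{t^i}{i!}\sum_{k_1+\cdots+k_i=k}\prod_{j=1}^i\hat\lambda_{k_j},
\end{equation*}
which is exactly the compound Poisson mass function of Section~3 with parameters $t$ and $(\hat\lambda_k)$. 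The case $k=0$ is $\mathbb{P}(Z_{U_n}^{N_n}=0)=(1-p_n)^{R_n}(1+o(1))\to e^{-t}$. **The main obstacle** is the rigorous bookkeeping of the convolution/blocking error when several clusters fall in nearby blocks: one must ensure that the $\phi$-mixing splitting can be iterated $R_n$ times with the errors adding only linearly (or quadratically) rather than compounding, and that the "bad" event of a cluster straddling a gap — estimated via $\mathbb{P}(Z_{U_n}^{g_n}\ge1)$ and Lemma~\ref{lower.bound2} — is genuinely summable against $R_n$; this is where the precise interplay between the decay rate $p$, the exponent $\eta$, and the choice of $\beta$ is used, and where the results of Sections~\ref{convolution}–\ref{generating.function} do the real work.
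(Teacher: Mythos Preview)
Your proposal is correct and follows essentially the same strategy as the paper: block $[0,N_n)$ into $r_n=t/\mathbb{P}(Z_{U_n}^{s_n}\ge1)$ pieces of length $s_n$, use the convolution formula (Lemma~\ref{convolution.lemma}) to compare $Z_{U_n}^{N_n}$ with the i.i.d.\ sum $\tilde W^n=\sum_{j=0}^{r_n-1}\tilde Z_{U_n}^{s_n,j}$, which is compound binomial with $r_np_n=t$ and cluster law $\lambda_k(s_n,U_n)\to\hat\lambda_k$, and then pass to the compound Poisson limit. The one point where the paper's execution differs from your sketch is in the ``rigorous bookkeeping'' you flag as the main obstacle: rather than iterating the convolution estimate combinatorially and tracking the errors by hand, the paper encodes the whole iteration in the generating-function identity $F_s(z)^r=F_{rs}(z)+E_s^r(z)$ (Lemma~\ref{inductionlemma}) and then bounds the $k$th Taylor coefficient of the error $E_s^r$ in one stroke via Cauchy's estimate (Lemma~\ref{derivative.lemma}), obtaining $\bigl|\mu_k^{N_n}-\tilde\mu_k^{r_n,s_n}\bigr|\lesssim k\,r_n\tilde\eta(s_n)$ with $\tilde\eta(s_n)=\mathbb{P}(Z_{U_n}^{\Delta_n}\ge1)+\phi(\Delta_n)$; this is linear in $r_n$ (not quadratic), and the two summands are then handled exactly as you describe, with Lemma~\ref{lower.bound2} for the first and the polynomial $\phi$-decay together with $s_n^\eta\mathbb{P}(Z_{U_n}^{s_n}\ge1)\to\infty$ for the second.
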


\section{The Convolution Formula}\label{convolution}

\begin{lemma}\label{convolution.lemma}
Let $(X,\mu,\mathcal{M},T)$ be a measure-preserving dynamical system, and let $\mathcal{A}$ 
be a generating partition. Let $0<s\leq t$ and $U\in\sigma(\mathcal{A}^n)$. 
Assume $\mu$ is left $\phi$-mixing. 

Then for all $0<\Delta<s/2$:
$$
\label{convolutionlemma}
\bigg|\mu(Z_U^{t+s}=k)-\sum_{j=0}^k\mu(Z_U^t=j)\mu(Z_U^s=k-j)\bigg| < \mu(\tau_U^{k+1}>t-\Delta)(4\mu(\tau_U\le\Delta) + 3\phi(\Delta-n)).
$$
\end{lemma}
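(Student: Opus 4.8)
The plan is to decompose the orbit segment of length $t+s$ into a first block of length $t$ and a second block of length $s$, with a small gap of length $\Delta$ inserted between them to enable the $\phi$-mixing estimate. The starting identity is the exact decomposition
$$
\mu(Z_U^{t+s}=k)=\sum_{j=0}^k\mu\!\left(Z_U^t=j,\ Z_U^s\circ T^t=k-j\right),
$$
which just sorts points by how many of the $k$ total hits fall in the first $t$ coordinates. The two obstacles to replacing the right side by the desired convolution $\sum_j\mu(Z_U^t=j)\mu(Z_U^s=k-j)$ are: (i) the event in the $j$-th summand refers to coordinates $0,\dots,t-1$ and $t,\dots,t+s-1$ with no gap, so $\phi$-mixing does not directly apply; and (ii) once a gap is introduced, the two factors correspond to $Z_U^{t-\Delta}$ and $Z_U^{s}\circ T^{t}$ (or similar truncations), not to $Z_U^t$ and $Z_U^s$, so the truncation error must be controlled.

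First I would handle the gap. Fix $j\le k$. On the event $\{Z_U^t=j\}$, the last of the first $j$ hits occurs at some time $\le t-1$; I want to delete a window of length $\Delta$ just before time $t$. Write $\{Z_U^t=j\}=\{Z_U^{t-\Delta}=j\}\setminus(\text{points with a hit in }[t-\Delta,t))$ up to the symmetric-difference set of measure at most $\mu(Z_U^t=j,\ \tau_U\circ T^{t-\Delta-1}\le\Delta)$, which is bounded by $\mu(\tau_U^{k+1}>t-\Delta)\,\mu(\tau_U\le\Delta)$ after a further $\phi$-mixing split (here one uses that $\{Z_U^t=j\}\subset\{\tau_U^{j+1}>t-j\}\subset\{\tau_U^{k+1}>t-\Delta\}$ for $j\le k$, which is where the factor $\mu(\tau_U^{k+1}>t-\Delta)$ in the final bound comes from). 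Then $\{Z_U^{t-\Delta}=j\}\in\sigma(\mathcal{A}^{t-\Delta})\subset\sigma(\mathcal{A}^{t-\Delta+n})$ is separated from $\{Z_U^s\circ T^t=k-j\}=T^{-t}\{Z_U^s=k-j\}$, which is measurable on coordinates $t,t+1,\dots$, by a gap of length $\Delta-n$ once we account for the $n$ extra coordinates needed to specify $U\in\sigma(\mathcal{A}^n)$. Applying left $\phi$-mixing gives
$$
\left|\mu\!\left(Z_U^{t-\Delta}=j,\ Z_U^s\circ T^t=k-j\right)-\mu(Z_U^{t-\Delta}=j)\,\mu(Z_U^s=k-j)\right|\le \phi(\Delta-n)\,\mu(Z_U^{t-\Delta}=j),
$$
and summing over $j$ the prefactors $\sum_j\mu(Z_U^{t-\Delta}=j)$ collapse — but only over the relevant range — to something bounded by $\mu(\tau_U^{k+1}>t-\Delta)$, giving the $3\phi(\Delta-n)$ contribution (one $\phi$ from this split, the others from the truncation splits above).

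Finally I would reinstate the truncation: replace $\mu(Z_U^{t-\Delta}=j)$ by $\mu(Z_U^t=j)$ at the cost of another symmetric-difference term of the same type, controlled again by $\mu(\tau_U^{k+1}>t-\Delta)\,\mu(\tau_U\le\Delta)$ together with a $\phi(\Delta-n)$ from splitting off the length-$\Delta$ window; collecting the four truncation/splitting steps yields the coefficient $4\mu(\tau_U\le\Delta)+3\phi(\Delta-n)$. The main obstacle I expect is bookkeeping the constants cleanly: making sure each of the (at most four) times one excises a length-$\Delta$ window and each of the (at most three) times one invokes $\phi$-mixing is correctly attributed, and that in every such step the conditioning event is a subset of $\{\tau_U^{k+1}>t-\Delta\}$ so that the common prefactor $\mu(\tau_U^{k+1}>t-\Delta)$ can be pulled out of the sum over $j$ rather than the cruder $\sum_j\mu(Z_U^{t-\Delta}=j)\le 1$. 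The hypothesis $\Delta<s/2$ is used to guarantee the gap sits strictly inside the first block and the second block of length $s$ is long enough that no further truncation of it is needed.
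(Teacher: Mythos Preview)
Your overall strategy---decompose $Z_U^{t+s}$ by how many hits fall in the first block, insert a gap of length $\Delta$, apply $\phi$-mixing across the gap, and control the truncation errors---is exactly the paper's. The difference is \emph{where} you put the gap: you truncate the first block to $[0,t-\Delta)$ and keep the second block $[t,t+s)$ intact, whereas the paper keeps the first block $[0,t)$ full length and truncates the second to $[t+\Delta,t+s)$. This seemingly cosmetic choice is what makes the bookkeeping work out in the paper and not quite in your plan.

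The issue is your ``further $\phi$-mixing split'' used to pull out the prefactor $\mu(\tau_U^{k+1}>t-\Delta)$ from the gap-hit error term. With your placement, the hit-in-the-gap event lives on coordinates $[t-\Delta,t)$ while the conditioning event $\{Z_U^{t-\Delta}\le k\}$ lives on $[0,t-\Delta+n)$; since $U\in\sigma(\mathcal A^n)$ these overlap by $n$ coordinates and there is no positive gap, so $\phi$-mixing does not apply. (Your inclusion ``$\{Z_U^t=j\}\subset\{\tau_U^{j+1}>t-j\}$'' is also not quite right---the correct containment is $\{Z_U^t=j\}\subset\{\tau_U^{j+1}>t\}$---but that is minor.) You could rescue the argument by backing off to $\{Z_U^{t-2\Delta}\le k\}$, at the cost of getting $\mu(\tau_U^{k+1}>t-2\Delta)$ instead of $\mu(\tau_U^{k+1}>t-\Delta)$. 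The paper sidesteps all of this: with the gap at $[t,t+\Delta)$, the gap-hit event is separated from $\{Z_U^{t-\Delta}\le k\}$ by a genuine gap of length $\Delta-n$, and the final replacement $\mu(Z_U^{s-\Delta}=k-j)\to\mu(Z_U^s=k-j)$ costs only $2\mu(\tau_U\le\Delta)$ with the prefactor $\sum_{j\le k}\mu(Z_U^t=j)=\mu(\tau_U^{k+1}>t)$ coming for free from the sum, no mixing needed. That asymmetry---truncate the second block, not the first---is what yields the clean constants $4$ and $3$.
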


This convolution type identity is a generalisation of an identity that was used by Galves and Schmitt~\cite{GS97} 
to find the limiting distribution of the first entry times to cylinder sets centred at points. Their identity
corresponds to the case here when $k=0$ and then it lead to in the limit to an exponential 
distribution.

\begin{proof}
Let $\Delta <s/2$ and assume we cut a cap of length $\Delta$. Then the probability of having exactly $j$ 
returns in the first $t$ amount of time, and then having exactly $k-j$ returns in the timespan $[t+\Delta,s-\Delta]$
is given by
$$
\sum_{j=0}^k\mu(\{Z_U^t=j\}\cap\{Z_U^{s-\Delta}\circ T^{t+\Delta}=k-j\}).
$$
Now put
$$
E= \{Z_U^t+Z_U^{s-\Delta}\circ T^{t+\Delta}=k\}
$$
Partitioning this set $E$ by the number of returns in the gap $[t,t+\Delta]$ yields
\begin{eqnarray*}
&\bigcup_{j=0}^k\{Z_U^t=j\}\cap\{Z_U^{s-\Delta}\circ T^{t+\Delta}=k-j\} = (E\cap \{\tau_U^1\circ T^t>\Delta\})\cup (E\cap \{\tau_U^1\circ T^t\leq \Delta\})
\end{eqnarray*}
where set has no returns in the gap, and the other set has least one return in the gap. 
Observe that the former is a subset of $\{Z_U^{t+s}=k\}$, since if no return occurs in addition to the already existing 
$k$ returns, we have exactly $k$ returns. Thus
\begin{eqnarray*}
\abs{\mu(Z_U^{t+s}=k)-\mu(E)}
& \leq& \mu(\{Z_U^{t+s}=k\}\cap\{\tau\circ T^t<\Delta\}\cup E\cap\{\tau\circ T^t<\Delta\})\\
& \leq &\mu(\{Z_U^{t+s}=k\}\cap \{\tau_U\circ T^t\le\Delta\}) +\mu(E\cap \{\tau_U\circ T^t\leq \Delta\}).
\end{eqnarray*}
The first term, $\mu(\{Z_U^{t+s}=k\}\cap \{\tau_U\circ T^t\leq\Delta\})$, can be approximated as follows:
\begin{eqnarray*}
\mu(\{Z_U^{t+s}=k\}\cap \{\tau_U\circ T^t\leq\Delta\})
& \leq& \mu(\{Z_U^{t-\Delta}\leq k\}\cap \{\tau_U\circ T^t\leq\Delta\}) \\
& =& \sum_{j=0}^k\mu(\{Z_U^{t-\Delta}=j\}\cap\{\tau_U\circ T^t\leq\Delta\}).
\end{eqnarray*}
That is, if the total number of returns add up to $k$, then it must be that during the first $t-\Delta$ amount of time, 
no more than $k$ returns should occur. Since $\{Z_U^{t-\Delta}=j\}$ is in 
$\sigma((\mathcal{A}^n)^{t-\Delta})=\sigma(\mathcal{A}^{n+t-\Delta})$, and since for the event $\{\tau_U\leq\Delta\}$, we have
$$
\{\tau_U\circ T^t<\Delta\} = T^{-t}\{\tau_U<\Delta\} = T^{-n-t+\Delta+n-\Delta}\{\tau_U<\Delta\},
$$
we can estimate the first term using the $\phi$-mixing property:
\begin{eqnarray*}
\sum_{j=0}^k\mu(\{Z_U^{t-\Delta}=j\}\cap\{\tau_U\circ T^t\leq\Delta\})
& \leq& \sum_{j=0}^k\mu(\{Z_U^{t-\Delta}=j\}\cap T^{-t}\{\tau_U\leq\Delta\}) \\
& \leq &\sum_{j=0}^k\mu(Z_U^{t-\Delta}=j)(\mu(\tau_U\leq\Delta)+\phi(\Delta-n)) \\
& = &\mu(\tau_U^{k+1}>t-\Delta)(\mu(\tau_U\leq\Delta)+\phi(\Delta-n)).
\end{eqnarray*}
The rest is estimated very crudely:
$$
E = \{Z_U^t+Z_U^{s-\Delta}\circ T^{t+\Delta}=k\} 
\subset \{Z_U^{t-\Delta}\leq k\} 
= \bigcup_{j=0}^k \{Z_U^{t-\Delta}=j\},
$$
which lets us conclude that
\begin{eqnarray*}
\mu(E\cap \{\tau_U\circ T^t\leq \Delta\})
& \leq &\sum_{j=0}^k \mu(\{Z_U^{t-\Delta}=j\}\cap \{\tau_U\circ T^t\leq \Delta\})\\
& \leq &\mu(\tau_U^{k+1}>t-\Delta)(\mu(\tau_U\leq\Delta)+\phi(\Delta-n)).
\end{eqnarray*}
We get for the middle error term
combined with the fact that $\{Z_U^t=j\}\in \sigma((\mathcal{A}^n)^t)=\sigma(\mathcal{A}^{n+t})$
$$
\abs{\mu(\{Z_U^t=j\}\cap\{Z_U^{s-\Delta}\circ T^{t+\Delta}=k-j\})-\mu(Z_U^t=j)\mu(Z_U^{s-\Delta}=k-j)} \leq \mu(Z_U^t=j)\phi(\Delta-n),
$$
using the $\phi$-mixing property. Hence when summed over $j$ from 0 to $k$, we obtain
$$
\sum_{j=0}^k \mu(Z_U^t=j)\phi(\Delta-n) = \mu(\tau_U^{k+1}>t)\phi(\Delta-n) \leq \mu(\tau_U^{k+1}>t-\Delta)\phi(\Delta-n).
$$
Finally, observe that the last part
\begin{eqnarray*}
&&\bigg|\sum_{j=0}^k\mu(Z_U^t=j)\mu(Z_U^{s-\Delta}=k-j)-\sum_{j=0}^k\mu(Z_U^t=j)\mu(Z_U^{s}=k-j)\bigg|\\
&&\hspace{4cm}= \sum_{j=0}^k\mu(Z_U^t=j)\abs{\mu(Z_U^{s}=k-j)-\mu(Z_U^{s-\Delta}=k-j)}
\end{eqnarray*}
can be approximated by making the observation similar to that we made for the first error term. Namely, we begin by noticing that
$$
\abs{\mu(Z_U^{s}=k-j)-\mu(Z_U^{s-\Delta}=k-j)} \leq \mu(\{Z_U^s=k-j\}\mathbin{\triangle}\{Z_U^{s-\Delta}=k-j\}).
$$
Observe that the intersection of $\{Z_U^s=k-j\}$ and $\{Z_U^{s-\Delta}=k-j\}$ is precisely the set 
$\{Z_U^{s-\Delta}=k-j\}\cap\{\tau_U\circ T^{s-\Delta}>\Delta\}$, therefore
$$
\{Z_U^{s-\Delta}=k-j\}\cap\{\tau_U\circ T^{s-\Delta}>\Delta\}\subset \{Z_U^s=k-j\},
$$
meaning one part of the symmetric difference is
$$
\{Z_U^s=k-j\}\setminus(\{Z_U^{s-\Delta}=k-j\}\cap\{\tau_U\circ T^{s-\Delta}>\Delta\}) = \{Z_U^s=k-j\}\cap\{\tau_U\circ T^{s-\Delta}\leq \Delta\}.
$$
Hence by monotonicity and by the fact that $T$ is measure-preserving,
\begin{eqnarray*}
\mu(\{Z_U^s=k-j\}\cap\{\tau_U\circ T^{s-\Delta}\leq \Delta\})
& \leq &\mu(\tau_U\circ T^{s-\Delta}\leq \Delta) \\& = &\mu(\tau_U\leq \Delta).
\end{eqnarray*}
For the other part of the symmetric difference, by the same token,
\begin{eqnarray*}
\mu(\{Z_U^{s-\Delta}=k-j\}\cap\{\tau_U\circ T^{s-\Delta}\leq\Delta\}) \leq \mu(\tau_U\leq \Delta).
\end{eqnarray*}
Hence the third error term is bounded by
\begin{eqnarray*}
\sum_{j=0}^k\mu(Z_U^t=j)\abs{\mu(Z_U^{s}=k-j)-\mu(Z_U^{s-\Delta}=k-j)}
& \leq &\sum_{j=0}^k2\mu(Z_U^t=j)\mu(\tau_U\leq\Delta) \\
& =& 2\mu(\tau_U^{k+1}>t)\mu(\tau_U\leq\Delta) \\
& \leq &2\mu(\tau_U^{k+1}>t-\Delta)\mu(\tau_U\leq \Delta).
\end{eqnarray*}
Combined, this gives the final error estimate
\begin{eqnarray*}
&&\bigg|\mu(Z_U^{t+s}=k)-\sum_{j=0}^k\mu(Z_U^t=j)\mu(Z_U^s=k-j)\bigg| \\
&&\qquad< \mu(\tau_U^{k+1}>t-\Delta)(4\mu(\tau_U\leq\Delta) + 3\phi(\Delta-n)).
\end{eqnarray*}
\end{proof}

For the first entry time we get the well-known and much used special case
$$
\abs{\mu(\tau_U>s+t)-\mu(\tau_U>t)\mu(\tau_U>s)}\leq \mu(\tau_U>t-\Delta)(4\Delta\mu(U)+2\phi(\Delta-n)).
$$
which was first derived by Galves and Schmitt~\cite{GS97} in the $\psi$-mixing case to get the 
limiting distribution of entry times. Here we used the estimate
$\mu(\tau_U\le \Delta)\le\sum_{j=0}^{\Delta-1}\mu(T^{-j}U)=\Delta\mu(U)$.


\section{Generating Functions}\label{generating.function}

Finding the limiting distribution of $Z^{N_n}_{U_n}$ is tantamount to finding the limit 
$\mu_k^{N_n}= \mu(Z^{N_n}_{U_n}=k)$ for every $k\in\mathbb{N}_0$. 
For this we shall use generating functions to approximate the distribution of $Z^{N_n}_{U_n}$ 
by a compound Binomial distribution for which Lemma~\ref{convolutionlemma} will be used.

Let $U$ be a measurable set and denote by $\mu^s_k$ the measure $\mu(Z_U^s=k)$. Since most of the times it will be evident what $U$ we are referring to (which, later, will be an element of a sequence $(U_n)_{n=1}^{\infty}$), we will suppress it and will simply write $\mu_k^s$. For $s$ fixed, consider the probability generating function
$$
F_s(z) = \sum_{k=0}^{\infty}\mu_k^sz^k.
$$
Since $\abs{\mu_k^s}\leq 1$ for every $k$, $F_s(z)$ is an analytic function in the unit disk.
 Observe further that $F_s(0)=\mu_0^s = \mu(\tau_U^1\ge s)$, and that
$$
\frac{1}{\ell!}\frac{d^\ell}{dz^\ell}F_s(z)\bigg|_{z=0} = \mu_\ell^s.
$$
$$
\eta(\Delta) = 4\Delta\mu(\tau_U\Delta)+3\phi(\Delta-n).
$$
Recall that by Lemma~\ref{convolution.lemma}, we have
\begin{eqnarray*}
\bigg|\mu_k^{t+s}-\sum_{j=0}^k\mu_j^t\mu_{k-j}^s\bigg| 
& < &\mu(\tau_U^{k+1}>t-\Delta)(4\Delta\mu(U)+3\phi(\Delta-n))\\
& < &\eta(\Delta)\sum_{j=0}^{k}\mu_j^{t-\Delta}\\
& \leq &\eta(\Delta).
\end{eqnarray*}
Assume that $\Delta$ is a function of $s$, so $\Delta=\Delta(s)$, whereupon $\eta$ implicitly depends on $s$. Denote by $\tilde{\eta}$ the composition $\eta\circ \Delta$, so that $\tilde{\eta}(s) = \eta(\Delta(s))$. Fix $s$ and for $a=1,2,\dots$, define 
$$
\xi_k^{as}= \sum_{j=0}^{k}\mu_j^{(a-1)s}\mu_{k-j}^s -\mu_k^{as},
$$
where $\xi_k^s=0$ and arrive at
$$
\abs{\xi^{as}_k}  \leq \mu(\tau_U^{k+1}>as)\tilde{\eta}(s) = \tilde{\eta}(s)\sum_{j=0}^{k}\mu_{j}^{as}.
$$
For $n=2,\dots$, put
$$
G_n(z)= \sum_{k=0}^{\infty}\xi_k^{(n-1)s}z^k.
$$
and $G_1=0$.

\begin{lemma}\label{inductionlemma}
For every positive integer $r>1$, on $\abs{z}<1$, the analytic function $F_s(z)$ satisfies
$$
F_{s}(z)^r = F_{rs}(z) + \sum_{k=2}^{r} G_k(z)F_s(z)^{r-k}.
$$
\end{lemma}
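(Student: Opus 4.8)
The plan is to prove the identity by induction on $r$, using the convolution estimate encoded in $\xi_k^{as}$ as the single algebraic engine. The base case $r=2$ is exactly the definition of $\xi_k^{2s}$: the coefficient of $z^k$ in $F_s(z)^2$ is $\sum_{j=0}^k \mu_j^s\mu_{k-j}^s$, which by definition equals $\mu_k^{2s}+\xi_k^{2s}$, so $F_s(z)^2=F_{2s}(z)+G_2(z)$ as $G_2(z)=\sum_k\xi_k^{s}z^k$ — wait, one must be careful with the indexing: $G_n(z)=\sum_k\xi_k^{(n-1)s}z^k$, so $G_2(z)=\sum_k\xi_k^{s}z^k=0$ by convention, and it is $G_3$ that carries $\xi_k^{2s}$. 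I would first pin down the correct shift so that the statement as written is internally consistent, then run the induction accordingly; this bookkeeping is the one genuinely error-prone spot.

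With the indexing fixed, the inductive step is a power-series manipulation. Assume $F_s(z)^r=F_{rs}(z)+\sum_{k=2}^r G_k(z)F_s(z)^{r-k}$ on $|z|<1$ (all series converge absolutely there since the coefficients are bounded by $1$, so rearrangement is legitimate). Multiply both sides by $F_s(z)$:
\begin{equation*}
F_s(z)^{r+1} = F_{rs}(z)F_s(z) + \sum_{k=2}^r G_k(z)F_s(z)^{r+1-k}.
\end{equation*}
Now the key observation is that the coefficient of $z^k$ in $F_{rs}(z)F_s(z)$ is $\sum_{j=0}^k\mu_j^{rs}\mu_{k-j}^s$, which by the definition of $\xi_k^{(r+1)s}$ equals $\mu_k^{(r+1)s}+\xi_k^{(r+1)s}$. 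Hence $F_{rs}(z)F_s(z)=F_{(r+1)s}(z)+G_{r+1}(z)$ (with $G_{r+1}$ being the generating function of the $\xi^{rs}$-coefficients, consistent with the shift $G_n(z)=\sum_k\xi_k^{(n-1)s}z^k$). Substituting this back gives
\begin{equation*}
F_s(z)^{r+1} = F_{(r+1)s}(z) + G_{r+1}(z) + \sum_{k=2}^r G_k(z)F_s(z)^{r+1-k} = F_{(r+1)s}(z) + \sum_{k=2}^{r+1} G_k(z)F_s(z)^{r+1-k},
\end{equation*}
which is the claim for $r+1$.

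I do not expect a serious obstacle here; this is essentially a telescoping/convolution identity and the only things to be careful about are (i) the index shift in the definition of $G_n$ versus the summation range $k=2,\dots,r$, which must be reconciled with the base case, and (ii) justifying term-by-term multiplication of the power series, which follows immediately from absolute convergence on the open unit disk (coefficients bounded by $1$). The analytic content — the actual estimate $|\xi_k^{as}|\le\tilde\eta(s)\sum_{j}\mu_j^{as}$ coming from Lemma~\ref{convolution.lemma} — is not needed for this lemma at all; it only enters later when $G_k(z)$ is bounded. So the proof is purely formal, and I would present it as a two-line induction once the indexing convention is stated explicitly.
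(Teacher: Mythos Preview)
Your approach is exactly the paper's: induction on $r$, with the key step $F_{rs}(z)F_s(z)=F_{(r+1)s}(z)+G_{r+1}(z)$ read off directly from the definition of $\xi_k^{as}$. Your caution about the index shift is well placed --- the paper's own text has the same off-by-one inconsistency between the stated definition $G_n(z)=\sum_k\xi_k^{(n-1)s}z^k$ and the superscripts that actually appear in its proof, so reconciling the convention before writing the induction (as you propose) is exactly the right move.
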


\begin{proof}
For $r=2$ we get
\begin{eqnarray*}
F_s(z)^2 
& =& \sum_{k=0}^{\infty}\mu_k^sz^k\sum_{k=0}^{\infty}\mu_k^sz^k \\
& =& \sum_{k=0}^{\infty}z^k\sum_{j=0}^k \mu_{j}^s\mu_{k-j}^s \\
& = &\sum_{k=0}^{\infty}(\mu_k^{2s}+\xi_k^s)z^k \\
& =& F_{2s}(z)+ G_2(z),
\end{eqnarray*}
where $G_2(z)= \sum_{k=0}^{\infty}\xi_k^sz^k$. 
For the induction step we get
$$
F_s(z)^{r+1} = F_s(z)^rF_s(z) = F_{rs}(z)F_s(z) + \sum_{k=2}^{r} G_k(z)F_s(z)^{r+1-k}
$$
where the term $F_{rs}(z)F_s(z)$ gives
\begin{eqnarray*}
F_{rs}(z)F_s(z)
& =& \sum_{k=0}^{\infty}\mu^{rs}_kz^k\sum_{k=0}^{\infty}\mu_k^sz^k \\
& = &\sum_{k=0}^{\infty}z^k\sum_{j=0}^k \mu_j^{rs}\mu_{k-j}^s \\
& =& \sum_{k=0}^{\infty} (\mu_k^{(r+1)s}+\xi^{rs}_k)z^k \\
& =& F_{(r+1)s}(z) + G_{r+1}(z)
\end{eqnarray*}
which proves the formula claimed.
\end{proof}

Our objective is to take the $k$-th derivative of $F_{rs}(z)$ with respect to the complex variable $z$, evaluate it at $z=0$, 
and obtain the formula for $\mu_k^{rs}$ for large $r$ and $s$. Let us denote by $E_s^r(z)$ the error function
$$
E_s^r(z) =  \sum_{k=2}^{r} G_k(z)F_s(z)^{r-k},
$$
so that 
$$
F_{rs}(z) = F_s(z)^r-E_s^r(z).
$$

\begin{lemma}\label{derivative.lemma}
Then there exists a constant $C$ so that 
$$
\bigg|\frac{1}{k!}\frac{d^k}{dz^k}E_s^r(0)\bigg| \le C kr\tilde{\eta}(s).
$$
\end{lemma}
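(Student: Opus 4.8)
The plan is to differentiate the error function $E_s^r(z)=\sum_{k=2}^r G_k(z)F_s(z)^{r-k}$ term by term, evaluate at $z=0$, and control the resulting sum using the crude bound $|\xi_k^{as}|\le\tilde\eta(s)\sum_{j=0}^k\mu_j^{as}\le\tilde\eta(s)$ together with the fact that $F_s(z)$ is a probability generating function. First I would record the elementary fact that $F_s(z)$ has nonnegative coefficients summing to $1$, hence $|F_s(z)|\le 1$ on the closed unit disk and, more importantly, that $\tfrac1{m!}\tfrac{d^m}{dz^m}F_s(z)^{\ell}\big|_{z=0}$ is itself a coefficient of the probability generating function of a sum of $\ell$ i.i.d.\ copies of $Z_U^s$, so it lies in $[0,1]$ and in particular is bounded by $1$ in absolute value. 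Likewise $\tfrac1{m!}\tfrac{d^m}{dz^m}G_k(z)\big|_{z=0}=\xi_m^{(k-1)s}$ is bounded by $\tilde\eta(s)$ in absolute value.

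Next I would apply the Leibniz rule to each product $G_k(z)F_s(z)^{r-k}$: for each $k$,
$$
\frac1{k!}\frac{d^k}{dz^k}\Big(G_k(z)F_s(z)^{r-k}\Big)\bigg|_{z=0}
=\sum_{m=0}^{k}\left(\frac1{m!}\frac{d^m}{dz^m}G_k(0)\right)\left(\frac1{(k-m)!}\frac{d^{k-m}}{dz^{k-m}}F_s(0)^{r-k}\right),
$$
wait — I must be careful that the index $k$ in the lemma is the order of differentiation, distinct from the summation index in $E_s^r$; I would rename the summation index, writing $E_s^r(z)=\sum_{i=2}^r G_i(z)F_s(z)^{r-i}$, and then take the $k$-th derivative. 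Each term in the Leibniz expansion is a product of a quantity bounded by $\tilde\eta(s)$ (a coefficient of some $G_i$) and a quantity bounded by $1$ (a coefficient of a power of $F_s$), so each term is at most $\tilde\eta(s)$ in absolute value; there are at most $k+1$ such terms per $i$, and at most $r-1$ values of $i$. Summing gives $\big|\tfrac1{k!}\tfrac{d^k}{dz^k}E_s^r(0)\big|\le (k+1)(r-1)\tilde\eta(s)\le 2kr\,\tilde\eta(s)$ for $k\ge1$, which is the claimed bound with $C=2$.

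The only subtlety — and the step I would be most careful about — is the uniform boundedness of the coefficients of $F_s(z)^{\ell}$ for all powers $\ell\le r$. This is not merely $|F_s|\le 1$ on the disk (which by Cauchy would only give $1$ for the $m$-th coefficient via a contour of radius close to $1$, which is in fact enough, but cleaner is the probabilistic reading): since $F_s$ is the p.g.f.\ of the nonnegative integer random variable $Z_U^s$, $F_s^{\ell}$ is the p.g.f.\ of a sum of $\ell$ independent copies, so its $m$-th Taylor coefficient at $0$ is a genuine probability in $[0,1]$, uniformly in $\ell$ and $m$. With that observation in hand the whole estimate is a bookkeeping of the Leibniz rule, and I would present it as such without grinding through the arithmetic.
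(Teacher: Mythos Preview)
Your proof is correct and takes a genuinely different route from the paper. The paper argues via Cauchy's estimate: it bounds $|G_i(z)|\le \tilde\eta(s)/(1-|z|)$ and $|F_s(z)|\le 1$ on the disk $|z|\le\rho<1$, obtaining $|E_s^r(z)|\le \tilde\eta(s)(r-1)/(1-\rho)$, then applies $\bigl|\tfrac{d^k}{dz^k}E_s^r(0)\bigr|\le k!M/\rho^k$ and optimises over $\rho$ (taking $\rho=k/(k+1)$) to arrive at $e(k+1)(r-1)\tilde\eta(s)$. Your argument bypasses complex analysis entirely by reading off coefficients directly via the Cauchy product/Leibniz rule, using that $[z^m]G_i=\xi_m^{(i-1)s}$ is bounded by $\tilde\eta(s)$ and that $[z^{k-m}]F_s^{\,r-i}$ is a probability and hence in $[0,1]$. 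This gives $(k+1)(r-1)\tilde\eta(s)$ immediately, with a better constant and no optimisation step. The Cauchy approach would be needed if only sup-norm information on $G_i$ and $F_s$ were available; here, because the coefficient-level bounds are already in hand, your direct computation is both shorter and more transparent.
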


\begin{proof}
If $0<\rho<1$ then Cauchy's estimate on the disk $\abs{z}\leq\rho$
\begin{equation}\label{cauchy}
\bigg|\frac{d^k}{dz^k}E_s^r(0)\bigg| \leq \frac{k!M}{\rho^k},
\end{equation}
where
$$
M = \sup_{z:\abs{z}=\rho}\abs{E_s^r(z)}.
$$
Since the power series representation of $F_s(z)$
 has only real coefficients $\mu_k^s$ (that sum to 1), we have $\abs{F_s(z)}\leq F_s(\abs{z})$. 
Therefore, for  $\abs{z}\leq\rho<1$ we get
\begin{eqnarray*}
F_s(\abs{z})
 &\leq& \mu_0^s + \sum_{k=1}^{\infty}\mu_k^s\abs{z}^k \\
& \leq& \mu_0^s + \sum_{k=1}^{\infty}\mu_k^s\rho^k \\
& \leq& \mu_0^s + \sum_{k=1}^{\infty}\mu_k^s\rho \\
& =& \mu(\tau_U^1>s)+\rho\mu(\tau_U^1\leq s) \\
&<&\mu(\tau_U^1>s)+\mu(\tau_U^1\leq s)=1,
\end{eqnarray*}
unless $\mu_0^s=1$ (then $F_s(z)\equiv 1$).
Moreover we have
$$
\abs{G_n(z)}
 \leq \sum_{k=0}^{\infty}\abs{\xi_k^{(n-1)s}}\abs{z}^k
  \leq \sum_{k=0}^{\infty} \tilde{\eta}(s)\abs{z}^k 
  = \frac{\tilde{\eta}(s)}{1-\abs{z}},
$$
as $\abs{\xi^{as}_k} \le \mu(\tau_U^{k+1}>as)\tilde{\eta}(s)$.
provided that $\abs{z}<1$. 
Consequently
$$
\abs{E_s^r(z)} \leq \sum_{k=2}^{r}\abs{G_k(z)} \leq \frac{\tilde{\eta}(s)(r-1)}{1-\rho}.
$$
So in this case, the total error in the $k$-th derivative is bounded by
$$
\bigg|\frac{d^k}{dz^k}E_s^r(0)\bigg| \leq \frac{k!\tilde{\eta}(s)(r-1)}{(1-\rho)\rho^k}.
$$
In all other cases, put $\sigma= \mu(\tau_U>s)+\rho\mu(\tau_U\leq s)<1$. Then
$$
\abs{E_s^r(z)} 
 \leq \sum_{k=2}^{r}\abs{G_k(z)}\abs{F_s(z)}^{r-k} 
 \leq \frac{\tilde{\eta}(s)}{1-\rho}\sum_{k=2}^{r}F_s(\abs{z})^{r-k} 
 \leq \frac{\tilde{\eta}(s)}{1-\rho}\sum_{k=2}^{r}\sigma^{r-k} 
 <  \frac{\tilde{\eta}(s)(r-1)}{(1-\rho)}.
$$
Hence the total error in the $k$-th derivative is bounded by the same limiting term
as $\sigma\le1$:
$$
\bigg|\frac{d^k}{dz^k}E_s^r(0)\bigg| \leq \frac{k!\tilde{\eta}(s)(r-1)}{(1-\rho)\rho^k}.
$$

The factor $(1-\rho)^{-1}\rho^{-k}$ attains the minimum on $(0,1)$ at $\rho=\frac{k}{1+k}$ which implies
 $(1-\rho)^{-1}\rho^{-k}=k^{-k}(1+k)^{1+k}$ and therefore
\begin{eqnarray*}
\bigg|\frac{d^k}{dz^k}E_s^r(0)\bigg| 
&\leq& \tilde{\eta}(s)(r-1)k!k^{-k}(1+k)^{1+k}\\
&\leq& \tilde{\eta}(s)(r-1)k!(1+k)(1+k^{-1})^{k}\\
&\leq& e\tilde{\eta}(s)(r-1)(k+1)!
\end{eqnarray*}
as $(1+k^{-1})^k<e$. Hence we deduce that
$$
\bigg|\frac{1}{k!}\frac{d^k}{dz^k}E_s^r(0)\bigg| \leq e\tilde{\eta}(r-1)(s)(k+1) < er\tilde{\eta}(s)(k+1),
$$
which implies
$$
\bigg|\frac{1}{k!}\frac{d^k}{dz^k}E_s^r(0)\bigg| \lesssim k r\tilde{\eta}(s).
$$
\end{proof}

\begin{proof}[Proof of Theorem~\ref{main.theorem}]
Let $t>0$ a parameter let $s_n\to \infty$ be a sequence so that $\mathbb{P}(Z_{U_n}^{s_n}\ge1)\to 0$ 
as $n\to\infty$ and put $r_n=\frac{t}{\mathbb{P}(Z_{U_n}^{s_n}\ge1)}$. Then we define the 
observation time by $N_n=s_nr_n$. 

Let $\tilde{Z}_{U_n}^{s_n,j}$, $j=0,1,2,\dots,r_n-1$, be i.i.d.\ random variables which have the same distributions 
as $Z_{U_n}^{s_n}$. Then then the random variable $\tilde{W}^n=\sum_{j=0}^{r_n-1}\tilde{Z}_{U_n}^{s_n,j}$
 is compound Binomially distributed with $p_n=\mathbb{P}(Z_{U_n}^{s_n}\ge1)$ and $r_n$ 
 and has parameters
$$
\hat\lambda_k(n)=\lambda_k(s_n,U_n)=\frac{\mathbb{P}(Z_{U_n}^{s_n}=k)}{\mathbb{P}(Z_{U_n}^{s_n}\ge1)}
$$
 for $k=1,2,\dots$.
If $\tilde{F}_n(z)=\sum_{k=0}^\infty\tilde\mu_k^{r_n,s_n}$ is the generating function of $\tilde{W}^n$, 
where $\tilde{\mu}_k^{r_n,s_n}=\mathbb{P}(\tilde{W}^n=k)$, then $\tilde{F}_n(z)=F_{s_n}(z)^{r_n}$
and by Lemma~\ref{derivative.lemma} then get the estimate
$$
\left|\mu^{r_ns_s}_k-\frac{1}{k!}\frac{d^k}{dz^k}F_{s_n}(z)^{r_n}\right|_{z=0}\bigg| \lesssim k r_n\tilde{\eta}(s_n).
$$
and using Lemma~\ref{lower.bound}
$$
\left|\mu_k^{N_n}-\tilde\mu_k^{r_n,s_s}\right|
\lesssim r_n\!\left(\mathbb{P}(Z_{U_n}^{\Delta_n}\ge1)+\phi(\Delta_n)\right)
\lesssim \mathbb{P}(Z_{U_n}^{r_n\Delta_n}\ge1)+r_n\phi(\Delta_n).
$$
For $\alpha\in(0,1)$ we put $\Delta_n=s_n^\beta$ and obtain that
$r_n\Delta_n=\frac{t}{\mathbb{P}(Z_{U_n}^{s_n}\ge1)}s_n^\alpha=\frac{N_n}{s_n^{1-\alpha}}$.
If $\phi$ is polynomially decreasing at rate $p$, i.e.\  $\phi(\ell)\lesssim \ell^{-p}$, 
then 
$$
\left|\mu_k^{N_n}-\tilde\mu_k^{r_n,s_n}\right|
\lesssim \mathbb{P}(Z_{U_n}^{r_n\Delta_n}\ge1)+\frac{s_n^{-p\alpha}}{\mathbb{P}(Z_{U_n}^{s_n}\ge1)}
$$
where the first term goes to zero if we can assure that 
$\mathbb{P}(Z_{U_n}^{r_n\Delta_n}\ge1)\le \mathbb{P}(Z_{U_n}^{s_n}\ge1)$.
To achieve this we require that $r_n\Delta_n<s_n$ which is implied by
$s_n^{1-\alpha}\mathbb{P}(Z_{U_n}^{s_n}\ge1)\to\infty$.
The  second term goes to zero if
$s_n^{p\alpha}\mathbb{P}(Z_{U_n}^{s_n}\ge1)\to \infty$ as $n\to\infty$.
All those conditions can be satisfied with suitable choices of $\gamma$ and $\alpha$ is
$s_n^\eta\mathbb{P}(Z_{U_n}^{s_n}\ge1)\to \infty$ as $n\to\infty$ for some $\eta\in(0,1)$.
We thus have that for every $k\in\mathbb{N}_0$:
$$
\left|\mathbb{P}(Z_{U_n}^{N_n}=k)-\mathbb{P}(\tilde{W}^n=k)\right|
\to 0
$$ 
as $n\to\infty$ which means that the distributional distance between $Z_{U_n}^{N_n}$ 
and $\tilde{W}^n$ goes to zero. 
Since $r_np_n=t$ one gets that $\tilde{W}^n$ converges in distribution to the compound Poisson $W$
distribution with parameters $t$ and $\lambda_k$. where $\lambda_k=\hat\lambda_k$ 
by Theorem~\ref{sequence.theorem}. Hence
$Z_{U_n}^{s_n}\longrightarrow W$ in distribution where $W$ is the compound distribution 
with parameters $t$. and $\lambda_k$, $k=1,2,\dots$.
\end{proof}

\section{Remarks}

\subsection{Kac scaling}

Let $(U_n)_{n=1}^{\infty}$ be a nested sequence of measurable sets such that 
$\bigcap_{n=1}^{\infty}U_n$ is a null set. Then for  $k\ge1$ put
 $$
 \alpha_k=\lim_{L\to\infty}\lim_{n\to\infty}\mathbb{P}(\tau_{U_n}^{k-1}< L\le\tau_{U_n}^k|U_n)
 $$
 where in particular if $k=1$ the coefficient $ \alpha_1=\lim_{L\to\infty}\lim_{n\to\infty}\mathbb{P}( L\le\tau_{U_n}|U_n)$
 is the extremal index.
 
\begin{lemma} 
Let $\mu$ be a $T$-invariant probability measures. Then
$$
\mathbb{P}(\tau_U<L)=\sum_{k=1}^{L}\mathbb{P}(U, \tau_{U}\ge k).
$$
If moreover the limit $\alpha_1=\lim_{L\to\infty}\lim_{n\to\infty}\alpha_1(L,U_n)$ exists and is positive
for a nested sequence $U_n$ so that $\mu(\bigcap_n U_n)=0$, then
$$
\lim_{L\to\infty}\lim_{n\to\infty}\frac{\mathbb{P}(\tau_{U_n}< L)}{L\mu(U_n)}=\alpha_1.
$$
\end{lemma}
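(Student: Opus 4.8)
The first identity is a Kac-type decomposition: partition the event $\{\tau_U<L\}$ according to the value of the last entry time before time $L$. More precisely, $\{\tau_U<L\}=\bigsqcup_{k=1}^{L-1}\{T^{-k}U,\ \tau_U\circ T^k\ge L-k\}$, and by $T$-invariance $\mathbb{P}(T^{-k}U,\ \tau_U\circ T^k\ge L-k)=\mathbb{P}(U,\ \tau_U\ge L-k)$; reindexing $j=L-k$ gives $\mathbb{P}(\tau_U<L)=\sum_{j=1}^{L-1}\mathbb{P}(U,\tau_U\ge j)$, which is the stated formula (up to the harmless inclusion of $j=L$, for which $\mathbb{P}(U,\tau_U\ge L)$ is a lower-order correction or absorbed by the convention on the sum). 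So the first part is a one-line rearrangement using only measure preservation.

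\textbf{The asymptotic statement.} Dividing by $L\mu(U_n)$ and using the identity, the plan is to write
$$
\frac{\mathbb{P}(\tau_{U_n}<L)}{L\mu(U_n)}=\frac1L\sum_{j=1}^{L}\mathbb{P}(\tau_{U_n}\ge j\mid U_n),
$$
i.e.\ the ratio is a \emph{Cesàro average} of the conditional non-return probabilities $a_n(j):=\mathbb{P}(\tau_{U_n}\ge j\mid U_n)$. First I would send $n\to\infty$ for fixed $L$: by definition $\alpha_1(L,U_n)=\mathbb{P}(L\le\tau_{U_n}\mid U_n)=a_n(L)$, and more generally $\lim_{n\to\infty}a_n(j)=:b(j)$ exists for each fixed $j$ (this is where one needs the hypothesis that the relevant limits exist; it follows from the assumed existence of the $\alpha_k$'s, since $a_n(j)=\sum_{i\ge j}\mathbb{P}(\tau_{U_n}^{i-1}<\cdot\le\tau_{U_n}^i\mid U_n)$-type telescoping, or simply because $1-a_n(j)=\mathbb{P}(\tau_{U_n}<j\mid U_n)=\sum_{\ell=1}^{j-1}\mathbb{P}(U_n\cap T^{-\ell}U_n)/\mu(U_n)$ and each term converges). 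Thus $\lim_{n\to\infty}\frac{\mathbb{P}(\tau_{U_n}<L)}{L\mu(U_n)}=\frac1L\sum_{j=1}^{L}b(j)$, and then letting $L\to\infty$ this Cesàro average converges to $\lim_{j\to\infty}b(j)$. Finally, the sequence $b(j)$ is non-increasing (as a limit of non-increasing sequences $a_n(\cdot)$) and bounded below by $0$, so its limit exists and equals $\inf_j b(j)=\lim_{L\to\infty}b(L)=\lim_{L\to\infty}\lim_{n\to\infty}\alpha_1(L,U_n)=\alpha_1$, giving the claim.

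\textbf{Main obstacle.} The delicate point is the interchange of the double limit with the Cesàro average: a priori $\frac1L\sum_{j\le L}a_n(j)$ is a double-indexed array, and one must justify that $\lim_{L}\lim_{n}$ of the average equals $\lim_{L}$ of $\frac1L\sum_{j\le L}b(j)$. For fixed $L$ this is trivial since the sum is finite, so the inner limit $n\to\infty$ passes through the sum termwise; the only real work is the outer limit, which is the standard fact that Cesàro means of a convergent (here monotone) sequence converge to the same limit. The place where $\phi$-mixing enters — if at all — is only in guaranteeing positivity of $\alpha_1$ is compatible with $\mu(U_n)\to0$ and perhaps in controlling the discarded boundary term $\mathbb{P}(U_n,\tau_{U_n}\ge L)/(L\mu(U_n))\to 0$ uniformly; but the core of the argument is elementary and does not require mixing beyond what is needed for the limits $\alpha_1(L,U_n)$ to behave well. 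I would present it in exactly this order: (1) the telescoping identity by invariance; (2) rewrite the ratio as a Cesàro average of $a_n(j)$; (3) pass $n\to\infty$ termwise for fixed $L$; (4) pass $L\to\infty$ via the Cesàro/monotone-limit lemma and identify the value as $\alpha_1$.
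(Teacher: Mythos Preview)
Your proposal is correct and follows essentially the same route as the paper: the identity via the last-hit decomposition and $T$-invariance, then the ratio rewritten as the Ces\`aro mean $\frac1L\sum_{k=1}^L\mathbb{P}(\tau_{U_n}\ge k\mid U_n)$, with the inner limit $n\to\infty$ passed through the finite sum and the outer limit $L\to\infty$ handled by the Ces\`aro lemma applied to the monotone sequence $b(k)$. If anything, your write-up is more explicit than the paper's, which compresses the second step into a single line (and in fact writes $\alpha_1(L,U_n)$ where $\alpha_1(k,U_n)$ is meant inside the sum).
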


\begin{proof} By invariance of the measure
\begin{eqnarray*}
\mathbb{P}(\tau_U<L)&=&\mathbb{P}(Z_{U}^{L}\ge 1)\\
&=&\sum_{j=0}^{L-1}\mathbb{P}(Z_U^{j}\ge1,T^{-j}U, \tau_{U}\circ T^{j}\ge L-j)\\
&=&\sum_{j=0}^{L-1}\mathbb{P}(U, \tau_{U}\ge L-j)\\
&=&\sum_{k=1}^{L}\mathbb{P}(U, \tau_{U}\ge k).
\end{eqnarray*}
To prove the second statement we see that $\mathbb{P}(\tau_{U_n}<L)=\sum_{k=1}^L\mu(U_n)\alpha_1(L,U_n)$
and taking limits provides the second statement.
\end{proof}

If $\alpha_1>0$ then by~\cite{HV09} Theorem~2
$$
\lambda_k = \frac{\alpha_k-\alpha_{k+1}}{\alpha_1},
$$
provided  $\sum_{k=1}^{\infty}k^2{\alpha}_k<\infty$.
Similarly, again, if $\alpha_1\not=0$, one gets
$$
\lim_{s\to\infty}\lim_{n\to\infty}\frac{\mu(Z^s_{U_n}=k)}{s\mu(U_n)} = \alpha_1\lambda_k.
$$

If the extremal index $\alpha_1$ is strictly positive then the scaling will be the traditional Kac scaling 
with a speed adjustment given by $\alpha_1$. 
Indeed, by Theorem~\ref{sequence.theorem} the observation time is then
$N_n=\frac{t}{\alpha_1\mu(U_n)}$ where we have used that $\frac{\mathbb{P}(Z_{U_n}^{s_n}\ge1)}{\alpha_1\mu(U_n)}$
converges to $1$ by Lemma~\ref{ratio.bound} as $n\to\infty$. If $\mu$ is $\phi$-mixing at a rate $\phi(\ell)\lesssim \ell^{-p}$
for some $p>0$ let us choose  $\omega\in(\frac1{1+p},1)$.  Then we put $s_n=\mu(U_n)^{-\omega}$ and $\Delta_n=s_n^\alpha$,
where we can choose $\alpha\in(0,1)$ so that $\frac1{a+\alpha p}<\omega$.
Then $s_n\mu(U_n)=\mu(U_n)^{1-\omega}\to0$ and $s_n^{1+p\alpha}\mu(U_n)=\mu(U_n)^{1-\omega(1+\alpha p)}\to\infty$ 
as the exponent $1-\omega(1+\alpha p)$ is negative. 
Let us observe that $r_n\mathbb{P}(Z_{U_n}^{\Delta_n}\ge1)\le r_n\Delta_n\mu(U_n)$ 
and therefore 
\begin{eqnarray*}
\left|\mathbb{P}(Z_{U_n}^{N_n}=k)-\mathbb{P}(\tilde{W}^n=k)\right|
&\lesssim &r_n\Delta_n\mu(U_n) +\frac{s_n^{-\alpha p}}{s_n\mu(U_n)}\\
&\lesssim& \mu(U_n)^{\omega+\alpha\omega} +\mu(U_n)^{\omega(1+\alpha)-1}
\end{eqnarray*}
which goes to zero as $n\to\infty$ as both exponents are positive.
We thus have proven the following corollary.

\begin{corollary}\label{corollary.kac}
Let $\mu$ be left $\phi$-mixing with 
$\phi(x)=\mathcal{O}(x^{-p})$, $p>0$. Let $(U_n)_{n=1}^{\infty}$ be a sequence of nested sets 
satisfying $U_n\in \sigma(\mathcal{A}^n)$ and $\mu(\bigcap_nU_n)=0$.
Assume the limits $\lambda_k=\lim_{L\to\infty}\lim_{n\to\infty}\lambda_k(L,U_n)$ exist for $k=1,2,\dots$
and put $\alpha_1=\left(\sum_{k=1}^\infty\lambda_k\right)^{-1}$.

Let $t>0$ and put $N_n=\frac{t}{\alpha_1\mu(U_n)}$,
then 
$$
Z_{U_n}^{N_n}=\sum_{j=0}^{N_n-1}\chi_{U_n}\circ T^j
$$
 converges in distribution to a compound Poisson random variable with parameters
$t$ and $\lambda_k$, $k=1,\dots$.
\end{corollary}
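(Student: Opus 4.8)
The plan is to re-run the generating-function argument from the proof of Theorem~\ref{main.theorem}, but now with the Kac observation time $N_n=\frac t{\alpha_1\mu(U_n)}$ fixed from the start and an auxiliary block length $s_n\to\infty$ that we are free to choose. Set $r_n=N_n/s_n$ and $\Delta_n=s_n^\alpha$ for an exponent $\alpha\in(0,1)$ to be selected, let $\tilde W^n=\sum_{j=0}^{r_n-1}\tilde Z_{U_n}^{s_n,j}$ be built from i.i.d.\ copies of $Z_{U_n}^{s_n}$ (so its generating function is $F_{s_n}(z)^{r_n}$; it is a compound Binomial with success probability $p_n=\mathbb{P}(Z_{U_n}^{s_n}\ge1)$, with $r_n$ trials, and with cluster parameters $\lambda_k(s_n,U_n)$), and show first that $Z_{U_n}^{N_n}$ and $\tilde W^n$ are asymptotically equidistributed, then that $\tilde W^n$ converges to the stated compound Poisson law.

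The first ingredient is a single-limit Kac asymptotic. The Kac-scaling lemma of this section provides $\frac{\mathbb{P}(\tau_{U_n}<L)}{L\mu(U_n)}\to\alpha_1$ in the iterated limit $\lim_{L\to\infty}\lim_{n\to\infty}$; feeding a fixed large $L$ and $s_n=r_nL$ into Lemma~\ref{ratio.bound} (this is where $\alpha_1=(\sum_k\lambda_k)^{-1}>0$ is used) upgrades this to
$$
\frac{\mathbb{P}(Z_{U_n}^{s_n}\ge1)}{s_n\mu(U_n)}\longrightarrow\alpha_1
$$
along any sequence $s_n\to\infty$ that does not grow too fast. In particular $p_n\le s_n\mu(U_n)\to0$ and $r_np_n=\frac t{\alpha_1}\cdot\frac{\mathbb{P}(Z_{U_n}^{s_n}\ge1)}{s_n\mu(U_n)}\to t$.

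For the parameter choice I would take $\omega\in\big(\tfrac1{1+p},1\big)$, put $s_n=\mu(U_n)^{-\omega}$, and choose $\alpha\in(0,1)$ with $\omega(1+\alpha p)>1$, which is possible precisely because $\omega>\tfrac1{1+p}$; then $r_n=\frac t{\alpha_1}\mu(U_n)^{\omega-1}\to\infty$. Exactly as in the proof of Theorem~\ref{main.theorem}, the convolution identity (Lemma~\ref{convolution.lemma}), Lemma~\ref{inductionlemma}, and the Cauchy bound on the error generating function (Lemma~\ref{derivative.lemma}) give
$$
\big|\mu_k^{N_n}-\tilde\mu_k^{r_n,s_n}\big|\ \lesssim\ r_n\big(\mathbb{P}(Z_{U_n}^{\Delta_n}\ge1)+\phi(\Delta_n)\big)\ \le\ r_n\Delta_n\mu(U_n)+r_n\phi(\Delta_n)\ \asymp\ \mu(U_n)^{\omega(1-\alpha)}+\mu(U_n)^{\omega(1+\alpha p)-1},
$$
and both exponents are positive, so the right-hand side tends to $0$. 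Hence it suffices to understand the compound Binomial $\tilde W^n$: its cluster parameters are $\lambda_k(s_n,U_n)$, which by Theorem~\ref{sequence.theorem} converge to the $\lambda_k$ of the hypothesis, and since $r_np_n\to t$ while $p_n\to0$, $\tilde W^n$ converges in distribution to the compound Poisson law with parameters $t$ and $(\lambda_k)$; by the triangle inequality so does $Z_{U_n}^{N_n}$, which is the assertion of Corollary~\ref{corollary.kac}.

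I expect the main obstacle to be the bookkeeping that makes a single choice of exponents $\omega,\alpha$ simultaneously yield the single-limit Kac asymptotic, the vanishing of the generating-function error, and the applicability of Theorem~\ref{sequence.theorem} along the divergent sequence $s_n=\mu(U_n)^{-\omega}$; the window $\omega\in(\tfrac1{1+p},1)$ is nonempty exactly because $\phi$ decays polynomially, so this is where the rate $p$ enters quantitatively, while the positivity hypothesis $\alpha_1>0$ (equivalently $\sum_k\lambda_k<\infty$) is what makes $\mathbb{P}(Z_{U_n}^{s_n}\ge1)$ comparable to $s_n\mu(U_n)$ in the first place.
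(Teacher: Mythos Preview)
Your proposal is correct and follows essentially the same route as the paper: choose $s_n=\mu(U_n)^{-\omega}$ with $\omega\in(\tfrac1{1+p},1)$ and $\Delta_n=s_n^\alpha$ with $\omega(1+\alpha p)>1$, use Lemma~\ref{ratio.bound} to turn the iterated Kac limit into the single-limit asymptotic $\mathbb{P}(Z_{U_n}^{s_n}\ge1)\sim\alpha_1 s_n\mu(U_n)$, bound $|\mu_k^{N_n}-\tilde\mu_k^{r_n,s_n}|$ by $r_n\Delta_n\mu(U_n)+r_n\phi(\Delta_n)\asymp\mu(U_n)^{\omega(1-\alpha)}+\mu(U_n)^{\omega(1+\alpha p)-1}\to0$, and finish via Theorem~\ref{sequence.theorem} and the compound Binomial $\to$ compound Poisson limit. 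Your exponent $\omega(1-\alpha)$ for the first error term is in fact the correct one (the paper's displayed $\omega+\alpha\omega$ appears to be a typographical slip).
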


\subsection{$\alpha$-mixing measures}

The measure $\mu$ is $\alpha$-mixing if 
$$
|\mu(B\cap T^{-n-k}C)-\mu(B)\mu(C)|
\le  \alpha(k)
$$
for all $B\in \sigma(\mathcal{A}^n)$, $C\in\sigma(\bigcup_{\ell=1}^\infty)$ and all $k\in\mathbb{N}$,
where $\alpha(k)\to 0$ as $k\to\infty$. From the proof of Lemma~\ref{convolution.lemma}
we get the adjusted statement that for all $\Delta<s/2$ now reads
$$
\label{convolutionlemma}
\bigg|\mu(Z_U^{t+s}=k)-\sum_{j=0}^k\mu(Z_U^t=j)\mu(Z_U^s=k-j)\bigg| 
\lesssim \mu(\tau_U\le\Delta) + \alpha(\Delta)
$$
(where we use that $\alpha(\Delta-n)\lesssim\alpha(\Delta)$ for all $U\in \sigma(\mathcal{A}^n)$
and $s<t$. Consequently, if $U_n\in\sigma(\mathcal{A}^n)$ is a nested sequence with $\mu(U_n)\to0$,
then for numbers $s_n\to\infty$ we put as before $r_n=\frac{t}{\mathbb{P}(Z_{U_n}^{s_n}\ge1)}$
and $N_n=r_ns_n$ where $t>0$ is a parameter, we get
$$
\left|\mu_k^{N_n}-\tilde\mu_k^{r_n,s_s}\right|
\lesssim r_n\!\left(\mathbb{P}(Z_{U_n}^{\Delta_n}\ge1)+\alpha(\Delta_n)\right).
$$
Since we cannot use Theorem~\ref{sequence.theorem} and Lemma~\ref{lower.bound} we can only formulate
the following result whose main assumption is that the cluster probabilites $\lambda_k$ can be 
realised by single limit along a suitable sequence $(s_n,U_n)$.

\begin{theorem}\label{alpha.theorem}
Let  $\mu$ be an $\alpha$-mixing $T$-invariant probability measure where 
$\alpha(x)=\mathcal{O}(x^{-p})$ for some positive $p$. Let $(U_n)_{n=1}^{\infty}$ is a sequence of nested sets 
such that $U_n\in \sigma(\mathcal{A}^n)$ and $\mu(\bigcap_nU_n)=0$. 

Let $\alpha\in(0,1)$ and $t>0$ and assume $s_n\to\infty$ is a sequence of numbers so that
if we put $\Delta_n=s_n^\alpha$, $r_n=\frac{t}{\mathbb{P}(Z_{U_n}^{s_n}\ge1)}$ and $N_n=r_ns_n$ 
then one has
$\mathbb{P}(Z_{U_n}^{s_n}\ge1)\to0$ as $n\to\infty$, $s_n^\eta\mathbb{P}(Z_{U_n}^{s_n}\ge1)\to\infty$
for some $\eta\in(0,1)$ and $r_n\mathbb{P}(Z_{U_n}^{\Delta_n}\ge1)\lesssim\mathbb{P}(Z_{U_n}^{s_n}\ge1)$.

If  $\lambda_k=\lim_{n\to\infty}\lambda_k(s_n,U_n)$  for $k=1,2,\dots$, then
$$
Z_{U_n}^{N_n}=\sum_{j=0}^{N_n-1}\chi_{U_n}\circ T^j
$$
 converges in distribution to a compound Poisson random variable with parameters
$t$ and $\lambda_k$, $k=1,\dots$.
\end{theorem}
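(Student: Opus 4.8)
The plan is to adapt the proof of Theorem~\ref{main.theorem} essentially verbatim, replacing every invocation of the $\phi$-mixing convolution estimate (Lemma~\ref{convolution.lemma}) by its $\alpha$-mixing analogue stated just above the theorem, and to carry over exactly the generating-function machinery of Section~\ref{generating.function}. First I would fix $t>0$ and the sequence $s_n\to\infty$ as in the hypotheses, set $p_n=\mathbb{P}(Z_{U_n}^{s_n}\ge1)$, $r_n=t/p_n$, $N_n=r_ns_n$, and introduce the i.i.d.\ block random variables $\tilde Z_{U_n}^{s_n,j}$, $j=0,\dots,r_n-1$, each with the law of $Z_{U_n}^{s_n}$, so that $\tilde W^n=\sum_j\tilde Z_{U_n}^{s_n,j}$ is compound Binomial with parameters $p_n$ and $r_n$ and cluster parameters $\lambda_k(s_n,U_n)$. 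Its generating function is $\tilde F_n(z)=F_{s_n}(z)^{r_n}$.

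Next I would run the generating-function comparison. The $\alpha$-mixing convolution bound gives $|\xi_k^{as}|\lesssim \mu(\tau_U\le\Delta)+\alpha(\Delta)$ in place of the $\phi$-mixing $\tilde\eta(s)\sum_j\mu_j^{as}$; plugging this into the series for $G_n(z)$ and $E_s^r(z)$ and applying the Cauchy-estimate argument of Lemma~\ref{derivative.lemma} yields
$$
\left|\mu_k^{N_n}-\tilde\mu_k^{r_n,s_n}\right|\lesssim k\,r_n\bigl(\mathbb{P}(Z_{U_n}^{\Delta_n}\ge1)+\alpha(\Delta_n)\bigr),
$$
with $\Delta_n=s_n^\alpha$. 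Using the crude bound $\mathbb{P}(Z_{U_n}^{\Delta_n}\ge1)\le\Delta_n\mu(U_n)$ is not available here since we are not assuming Kac-type control of $\mu(U_n)$, so instead I would invoke directly the hypothesis $r_n\mathbb{P}(Z_{U_n}^{\Delta_n}\ge1)\lesssim\mathbb{P}(Z_{U_n}^{s_n}\ge1)\to0$. For the second term, $r_n\alpha(\Delta_n)\lesssim s_n^{-p\alpha}/p_n$, and since $s_n^\eta p_n\to\infty$ for some $\eta\in(0,1)$ one can choose $\alpha\in(0,1)$ with $p\alpha>\eta$ (enlarging $p$ is harmless, or shrinking $\alpha$ only if $p\alpha>\eta$ remains achievable — if $p$ is too small one takes $\alpha$ close to $1$), forcing $s_n^{-p\alpha}/p_n\to0$. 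Hence $|\mu_k^{N_n}-\tilde\mu_k^{r_n,s_n}|\to0$ for each fixed $k$.

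Finally, since $r_np_n=t$, the compound Binomial $\tilde W^n$ converges in distribution to the compound Poisson $W$ with parameters $t$ and $\lim_n\lambda_k(s_n,U_n)=\lambda_k$, as recorded in Section~3; combining this with the vanishing of the distributional distance between $Z_{U_n}^{N_n}$ and $\tilde W^n$ gives $Z_{U_n}^{N_n}\to W$ in distribution, which is the claim. The one genuine difference from the $\phi$-mixing proof is that Theorem~\ref{sequence.theorem} and Lemma~\ref{lower.bound} are unavailable — the $\alpha$-mixing error $\alpha(k)$ does not carry the measure-proportional factor that makes the sub/super-multiplicative blocking arguments work — so the identification of $\lambda_k$ via a single limit must be \emph{assumed} rather than derived, and correspondingly the condition $r_n\mathbb{P}(Z_{U_n}^{\Delta_n}\ge1)\lesssim\mathbb{P}(Z_{U_n}^{s_n}\ge1)$ must be imposed as a hypothesis rather than deduced from $s_n^{1-\alpha}\mathbb{P}(Z_{U_n}^{s_n}\ge1)\to\infty$. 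I expect the only mild subtlety to be bookkeeping the exponents so that a single choice of $\alpha\in(0,1)$ simultaneously makes $\Delta_n<s_n/2$ valid in the convolution lemma and makes $s_n^{-p\alpha}/p_n\to0$; this is where the hypothesis $s_n^\eta p_n\to\infty$ is used.
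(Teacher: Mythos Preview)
Your proposal is correct and follows essentially the same approach as the paper: the paper's treatment of this theorem (in Section~6.2) is precisely the observation that the $\alpha$-mixing convolution estimate yields $\left|\mu_k^{N_n}-\tilde\mu_k^{r_n,s_n}\right|\lesssim r_n\bigl(\mathbb{P}(Z_{U_n}^{\Delta_n}\ge1)+\alpha(\Delta_n)\bigr)$, and then notes explicitly that Theorem~\ref{sequence.theorem} and Lemma~\ref{lower.bound} are unavailable, so the single-limit existence of $\lambda_k$ and the bound $r_n\mathbb{P}(Z_{U_n}^{\Delta_n}\ge1)\lesssim\mathbb{P}(Z_{U_n}^{s_n}\ge1)$ must be assumed rather than derived. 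You have identified exactly these points, including the reason the blocking lemmas fail (the $\alpha$-mixing error lacks the measure-proportional factor).
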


\subsection{Gibbs Markov measures}
Let $(\Omega,\mu)$ be Lebesgue space with a map $T:\Omega\to\Omega$ and 
$\mu$  $T$-invariant probability measure. Let $\mathcal{A}$  be a countable partition of
$\Omega$ which we assume has the Markov property, that is for every $A\in\mathcal{A}$
the image $T(A)$ is a union of elements in $\mathcal{A}$ and that moreover $T$ is one-to-one
on each element in $\mathcal{A}$. We also assume that the partition is generating, i.e.\
$\mathcal{A}^\infty$ consists of singletons. If one puts $\varphi=\frac{d\mu}{d(\mu\circ T)}$ for 
the potential function of $\mu$ then we say the map $T$ is {\em Gibbs-Markov} if:\\
(i) (Big image property) There exists a constant $C>0$ such that $\mu(A)\ge C$ for all $A\in\mathcal{A}$.\\
(ii) (Distortion) $\log \varphi$ is Lipschitz continuous on each element in $\mathcal{A}$ with respect 
to the separation metric $d_\vartheta$.\\
The separation metric is given by $d_\vartheta(x,y)=\vartheta^{s((x,y)}$ for some $\vartheta\in(0,1)$ 
and where $s(x,y)=\min\{j\ge0: A(T^jx)\not=A(T^jy)\}$ and $A(z)\in\mathcal{A}$ so that $z\in A(z)$.
For such measure $\mu$ it was shown in~\cite{MN05}, Lemma~2.4, that there exists a $\tau\in(0,1)$
and a constant $c_1$ so that 
$$
\left|\mu(B\cap T^{-k-n}C)-\mu(B)\mu(C)\right|
\le c_1\tau^k\mu(B)\sqrt{\mu(C)}
$$
for all $B\in\sigma(\mathcal{A}^n)$, $C\in\sigma(\bigcup_{\ell=1}^\infty\mathcal{A}^\ell)$ and all $k\in\mathbb{N}$.
This implies that $\mu$ is $\phi$-mixing where $\phi$ decays exponentially fast.
Therefore we can apply Theorem~\ref{main.theorem}.

A simple example of a Gibbs Markov map is the Gauss map, where $\Omega=(0,1]$, 
$Tx=\frac1x \mod 1$, the measure $\mu$ is the Gauss measure given by $d\mu(x)=\frac1{\log 2}\frac{dx}{1+x}$
 and the partition is $\mathbb{A}=\{(\frac1{j+1},\frac1j]: j\in\mathbb{N}\}$.


\begin{thebibliography}{99}


\bibitem{Ab06} M Abadi: Hitting, returning and the short correlation function;
Bull.\ Braz.\ Math.\ Soc.~{\bf 37(4)} (2006), 1--17.

\bibitem{Ab08} M Abadi: Poisson approximations via Chen-Stein for non-Markov
processes; {\it In and Out of Equilibrium 2} V Sidoravicius and M E Vares (editors),
2008, pp1--19.

\bibitem{AS11} M Abadi and B Saussol: Hitting and returning to rare events for all alpha-mixing processes;
 Stochastic Process.\ Appl.\ {\bf 121(2)}, 314--323

\bibitem{AV08} M Abadi and N Vergne: Sharp errors for point-wise Poisson approximations
in mixing processes; Nonlinearity {\bf 21} (2008), 2871--2885.


\bibitem{AA}V Ara\'ujo and H Ayta\c{c}: Decay of correlations and laws of rare events for random maps with a dense orbit;
Nonlinearity, {\bf 30} (2017), 1834--1852.

\bibitem{AFV} H Ayta\c{c}, J M Freitas and S Vaienti: Laws of rare events for deterministic and random dynamical systems;
Trans.\ Am.\ Math.\ Soc.\, {\bf 367} (2015), 8229--8278.

\bibitem{BBR19} W. Bahsoun, C. Bose, and M. Ruziboev. 2019: Quenched Decay of Correlations for Slowly
Mixing Systems; Loughborough University, https://hdl.handle.net/2134/36692.

\bibitem{Bu99} J Buzzi: Exponential Decay of Correlations for Random Lasota-Yorke Maps;
Commun.\ Math.\ Phys.~{\bf 208} (1999), 25--54.

\bibitem{CC13} Chazottes, J-R., and P. Collet: Poisson approximation for the number of visits to balls in non-uniformly hyperbolic dynamical systems; Ergodic Theory and Dynamical Systems 33.1 (2013): 49--80.


\bibitem{Col96} P Collet: Some ergodic properties of maps of the interval. Dynamical Systems (Temuco, 1991/1992),
Travaux en Cours, Vol. 52 Hermann Paris, 1996, pp. 55--91.

\bibitem{DGS04} Denker, Manfred, Mikhail Gordin, and Anastasya Sharova: A Poisson limit theorem for toral automorphisms;
 Illinois Journal of Mathematics 48.1 (2004): 1--20.

\bibitem{FFV17} A.C.M. Freitas, J. M. Freitas and S. Vaienti: Extreme Value Laws for non stationary processes generated by sequential and random dynamical systems;
Ann. Inst. H. Poincaré Probab. Statist. {\bf 53(3)} (2017), 1341--1370.

\bibitem{GHV24} S Gallo, N Haydn and S Vaienti: Number of visits in arbitrary sets for $\phi$-mixing dynamics;
 Ann.\ Inst.\ Henri Poincar\'e,  Prob.\ et Statist.\ {\bf 60(2)}, 1150–1187 (2024)


\bibitem{GS97} A Galves and B Schmitt: Inequalities for hitting time in mixing dynamical systems;
 Random Comput.\ Dynam.\ {\bf 5} (1997), 319--331.


\bibitem{HRY} N T A Haydn, J. Rousseau and Fan Yang:  Exponential Law For Random Maps On Compact Manifolds; 


\bibitem{HV09} N Haydn and S Vaienti: The distribution of return times near periodic orbits;
Probability Theory and Related Fields {\bf 144} (2009), 517--542.

\bibitem{HV20} Haydn, Nicolai, and Sandro Vaienti. "Limiting entry and return times distribution for arbitrary null sets." Communications in Mathematical Physics {\bf 378} (2020): 149--184.

\bibitem{HW14} N T A Haydn and K Wasilewska: Limiting distribution and error terms for the
 number of visits to balls in non-uniformly hyperbolic dynamical systems;
Disc.\ Cont.\ Dynam.\ Syst.\ {\bf 36(5)} (2016),  2585--2611.

\bibitem{Hir93} M Hirata: Poisson law for Axiom A diffeomorphisms;
Ergod.\ Th.\ \& Dynam.\ Syst.\ {\bf 13} (1993), 533--556.



\bibitem{HSV99} M Hirata, B Saussol and S Vaienti:
 Statistics of return times: a general framework and new applications;
Comm.\ Math.\ Phys.\ {\bf 206} (1999),  33--55.

\bibitem{Kac} M Ka\v{c}: On the notion of recurrence in discrete stochastic processes;
Bull.\ Amer.\ Math.\ Soc.\ {\bf 53} (1947), 1002--1010.

\bibitem{KL06} Kifer, Yuri, and Pei-Dong Liu. "Random dynamics." Handbook of dynamical systems 1 (2006): 379--499.

\bibitem{LV} Xin Li and Helder Vilarinho: Almost sure mixing rates for non-uniformly expanding maps; 2014, preprint available at {\tt http://arxiv.org/abs/1501.00010}.

\bibitem{MarieR}
\newblock P. Marie and J. Rousseau,
\newblock \emph{Recurrence for random dynamical systems},
\newblock  Discrete Contin. Dyn. Syst. , \textbf{30} (2011),no. 1, 1--16.

\bibitem{MN05} I Melbourne and M Nicol: Almost Sure Invariance Principle for Nonuniformly Hyperbolic Systems;
Commun.\ Math.\ Phys.\ 260, 131--146 (2005).

 \bibitem{Pit91} B Pitskel: Poisson law for Markov chains;
Ergod.\ Th.\  \& Dynam.\ Syst.\ {\bf 11} (1991), 501--513.

 \bibitem{Rou14} J.\ Rousseau: Hitting time statistics for observations of dynamical systems,
Nonlinearity \textbf{27} (2014) 2377--2392.

\bibitem{RSV} J.\ Rousseau, B.\ Saussol, P.\ Varandas: Exponential law for random subshifts of finite type, 
Stochastic Process.~Appl. \textbf{124} (2014)  3260--3276.

\bibitem{RT}J. Rousseau and M. Todd: Hitting times and periodicity in random dynamics, J. Stat. Phys. \textbf{161} (2015), no. 1, 131--150.








\end{thebibliography}
\end{document}